\newtheorem{prop}{Proposition}
\newtheorem{theo}{Theorem}
\newtheorem{lemme}{Lemma}
\theoremstyle{remark}
\newtheorem{rem}{Remark}
\newcommand{\R}{\mathbb R}
\newcommand{\B}{\mathbf B}
\newcommand{\Ld}{\mathrm L}
\newcommand{\ep}{\varepsilon}
\newcommand{\HH}{\mathrm H}
\newcommand{\HHc}{\mathrm {H_{curl}}}
\newcommand{\vv}{\mathbf v}
\begin{document}

\title{Nonlinear stability of a Vlasov equation for magnetic plasmas}
\author{Fr\'ed\'erique Charles\thanks{
UPMC-Paris06, CNRS UMR 7598, Laboratoire Jacques-Louis Lions, 4, pl. Jussieu
F75252 Paris cedex 05.} \thanks{Email: charles@ann.jussieu.fr} 
\and Bruno Despr\'es\footnotemark[1]  \thanks{
Email: despres@ann.jussieu.fr} 
\and Beno\^ \i t Perthame\footnotemark[1] \thanks{
INRIA Rocquencourt EPI BANG. Email: benoit.perthame@upmc.fr} \and R\'emi
Sentis\thanks{
CEA Bruy\`eres-le-Chatel. Email: remi.sentis@CEA.FR} }
\date{\today}
\maketitle

\begin{abstract}
The mathematical description of laboratory fusion plasmas produced in
Tokamaks is still challenging. Complete models for electrons and ions, as
Vlasov-Maxwell systems, are computationally too expensive because they take
into account all details and scales of magneto-hydrodynamics. In particular,
for most of the relevant studies, the mass electron is negligible and the
velocity of material waves is much smaller than the speed of light.
Therefore it is useful to understand simplified models. Here we propose and study  one
of those which keeps both the complexity of the Vlasov equation for ions and
the Hall effect in Maxwell's equation. Based on energy dissipation, a
fundamental physical property, we show that the model is nonlinear stable
and consequently prove existence.
\end{abstract}

\bigskip

\noindent \textbf{Key words:} Vlasov equations; Plasma physics; Kinetic
averaging lemma; Maxwell's equations. \\[2mm]
\noindent \textbf{Mathematics Subject Classification:} 35B35, 35L60, 82D10


\section{Introduction}

\label{sec:motivation} 

To describe the behavior of ions population in hot plasmas, it is very
classical to address, at least at theoretical level, a Vlasov equation
coupled to a non-linear Poisson equation which defines the electrostatic
field. When the magnetic field $\mathbf{B}$ is an external datum, for the
ion distribution function $f(t,x,\mathbf{v})$ (at position $x$ and velocity $
\mathbf{v}$), one addresses the following system 

\begin{eqnarray*}
{\frac{\partial {f}}{\partial {t}}+\mathbf{v}.\nabla f+\frac{\partial }{
\partial \mathbf{v}}.[\left( -{T_{\mathrm{e}}\nabla \ln n_{e}+}\mathbf{v}
\wedge \mathbf{B}\right) f]} &{=}&0,
 \\
- \lambda ^{2}\Delta {\ln {n_{e}}} &{=}&{}\int f(\mathbf{v})d\mathbf{v}{
-n_{e}},
\end{eqnarray*}
where $n_{e}=n_{e}(t,x)$ is the electron density, $T_{\mathrm{e}}$
is the mean electron temperature assumed to be constant and $\lambda $ is
the Debye length (a characteristic constant of the plasma). 
This system is a classical one, indeed the electrostatic field may be approximated by ${-T_{\mathrm{e}}\nabla \ln n_{e}}$ (at least when the electron temperature is constant) and
the Poisson equation is nothing but the Gauss relation applied to this field (cf. \cite{chen} for example)

The mathematical understanding of this kind of kinetic system has made
important progresses with the proof of existence of global weak solutions in the large  of  the
Vlasov-Maxwell system in \cite{DiLi} and of the Vlasov-Poisson system \cite
{LiPe, Pfaf}. However, the mathematical description of laboratory
fusion plasmas (such as those produced in Tokamaks) with this kind of ion
kinetic models is still  a major challenge, in particular to deal with a time scale compatible with the
evolution equation for the magnetic field. As a matter of fact, complete
systems of Vlasov-Maxwell type which account for all scales of
electro-dynamics are not relevant since the velocity of ion waves is much
smaller than the speed of light:
 it is well known  that at the time scale of the ion population it is convenient to neglect the current
of displacement in the Maxwell equations (as in magneto-hydrodynamics models
cf. \cite{lutjens1,lutjens2}), that is to say the electric current $\mathbf{J
}$ is assumed to satisfy

\begin{equation*}
\mathbf{J}=\nabla \wedge \mathbf{B}.
\end{equation*}

The aim of this work is precisely to propose, justify and study a model 
which
couples a kinetic equation for
 the ions and an evolution equation for the
magnetic field (as those used
 in magneto-hydrodynamics). The unknowns are the ion
particle density $f(t,x,\mathbf{v})$, the magnetic field $\mathbf{B}(t,x)$
and the electron density $n_{e}(t,x)$ and they satisfy 
\begin{equation}
\label{model}
\left\{ \begin{array}{ll}
\displaystyle - \lambda ^{2}\Delta \ln {n_{e}={n_{I}-n_{e}}}, & \quad (a) 
\\[3mm] 
 \displaystyle \frac{\partial \mathbf{B}}{\partial {t}} -\nabla \wedge  
  \left( \frac{1}{n_{e}} n_{I}\mathbf{u}_{I}\wedge \mathbf{B}\right) 
  + \nabla \wedge \left( \frac{1}{n_{e}} \mathbf{J} \wedge \mathbf{B} \right)
  + \nabla \wedge \left( \eta \nabla \wedge \mathbf{B} \right) =0, & \quad (b) \\[3mm] 
 \displaystyle \frac{\partial {f}}{\partial {t}}+\mathbf{v} \cdot \nabla f
  +\frac{\partial }{\partial \mathbf{v}}  \cdot 
   \left[ {\left( -\frac{{T_{\mathrm{e}}}}{{n_{e}}}\nabla {n_{e}+}
   \frac{\mathbf{J}-n_{_{I}}\mathbf{u}_{I}}{n_{e}}\wedge 
   \mathbf{\ B}+\mathbf{v}\wedge \mathbf{B}\right) f}\right] =0, & \quad (c)
\\[3mm] 
 \nabla \cdot \mathbf{B}=0, & \quad (d)
\end{array}
\right.  
\end{equation}
where the following notations are used 
\begin{equation}
 \label{defni}
n_{I}(t,x)=\int_{\mathbb{R}^{3}}f(t,x,\mathbf{v})d\mathbf{v}\qquad \quad 
\textrm{(the number density in ions)}, 
\end{equation}
\begin{equation}
\label{defui}
n_{I}(t,x)\mathbf{u}_{I}(t,x)=\int_{\mathbb{R}^{3}}f(t,x,\mathbf{v})
 \mathbf{v}d\mathbf{v}\qquad \textrm{(the macroscopic velocity of ions)}.  
\end{equation}
and $\eta $ denotes a strictly positive bounded function corresponding to
the plasma resistivity.

Notice firstly that the electric field $\mathbf{E}$ is given by the relation 
\begin{equation}
n_{e}\mathbf{E}=-T_{\mathrm{e}}\nabla n_{e}-n_{I}\mathbf{u}_{I}\wedge 
\mathbf{B}+\mathbf{J}\wedge \mathbf{B}+n_{e}\,\eta \nabla \wedge \mathbf{B}
\label{ohm}
\end{equation}
which is one of the classical forms of the generalized Ohm law \cite
{chen,braginskii,blum,freidberg} (it includes the term $\mathbf{J}\wedge 
\mathbf{B}$ related to the Hall effect); so, \eqref{model}-b) is exactly the
Faraday equation ${\frac{\partial \mathbf{B}}{\partial {t}}+\nabla \wedge 
\mathbf{E}}=0.$

 We have chosen here a rescaling, such that the ion mass is set
to $1$ and the electron and ion charge are also set to $1.$ Moreover the
scaling of the ion and electron density is such that the characteristic
value of $n_{I}$ is equal to 1. So with this scaling, the Debye length $\lambda $ is defined by $\lambda ^{2}=T_{\mathrm{e}}\varepsilon _{0}$ where ${\varepsilon }_{0}$ is vacuum dielectric constant and  equation \eqref{model}-a)
reads also as 
$$-{\varepsilon _{0}T_{\mathrm{e}}}\Delta \ln {n_{e}} =n_{I}- n_e  $$
  which is Gauss relation applied to 
 $-T_{e}\nabla \ln {n_{e}},$ the dominant term in  the electric field.
  In the rest of the paper, we use the following notations 
\begin{equation}
\label{ohmo}
\mathbf{E}^{0}(t,x)=-T_{\mathrm{e}}\frac{\nabla n_{e}}{n_{e}}+ 
\frac{1}{n_{e}} (\mathbf{J}-n_{_{I}}\mathbf{u}_{I})\wedge \mathbf{B}
\qquad 
\mbox{ and }
\qquad
\mathbf{F}(t,x,\mathbf{v})=\mathbf{E}^{0}+\mathbf{v}\wedge {\mathbf{B}}.
\end{equation}
         Here
   $\mathbf{F}$ is the kinetic force field which appears in equation %
\eqref{model}-c).

\begin{rem} It is fundamental to notice that the Debye length is a
small quantity in many situations; it may be proved that in the limit 
$\lambda \rightarrow 0$ the solution $n_{e}$ to the non-linear Poisson
equation satisfies ${n_{e}}\rightarrow {n_{I}}$ and many studies have
analyzed this singular limit in different contexts \cite{breGS,DegondQN, DHKquasi}. Then, if we make the approximation ${n_{e}}\simeq {n_{I},}$
the Ohm law (\ref{ohm}) leads to the classical formula 
\begin{equation*}
\mathbf{E}+\mathbf{u}_{I}\wedge \mathbf{B}=-T_{\mathrm{e}}\nabla \ln {n_{I}}+
\frac{1}{n_{I}}(\nabla \wedge \mathbf{B)}\wedge \mathbf{B}+\eta \nabla
\wedge \mathbf{B}
\end{equation*}
and from (\ref{model}), we recover formerly the following system 
\begin{eqnarray*}
\frac{\partial \mathbf{B}}{\partial {t}}-\nabla \wedge \left( \mathbf{u}
_{I}\wedge \mathbf{B}\right) +\nabla \wedge \left( \frac{1}{n_{I}}(\nabla
\wedge \mathbf{B)}\wedge \mathbf{B})\right) +\nabla \wedge \left( \eta
\nabla \wedge \mathbf{B}\right)  &=&0, \\
\frac{\partial {f}}{\partial {t}}+\mathbf{v}.\nabla f+\frac{\partial }{
\partial \mathbf{v}}\left[ {(}\mathbf{E}^{\lim }+{\mathbf{v}\wedge \mathbf{B}
})f\right]  &=&0, \\
\qquad n_{I}(\mathbf{E}^{\lim }+\mathbf{u}_{I}\wedge \mathbf{B}) &=&-T_{
\mathrm{e}}\nabla n_{I}+(\nabla \wedge \mathbf{B})\wedge \mathbf{B}.
\end{eqnarray*}
Despite its apparently simpler form  than the system \eqref{model}, it is not
used in physical literature, up to our knowledge. As a matter of fact even
if the magnetic field is neglected, the equation 
\begin{equation*}
\frac{\partial {f}}{\partial {t}}+\mathbf{v}.\nabla f+\frac{\partial }{
\partial \mathbf{v}}\left[ {(}\mathbf{E}^{\lim }+{\mathbf{v}\wedge \mathbf{B}
})f\right] =0,\ \quad n_{I}\mathbf{E}^{\lim }=-T_{\mathrm{e}}\nabla n_{I}
\end{equation*}
is probably ill-posed from a mathematical point of view (nevertheless 
\cite{DHKquasi} has noticed that using some arguments of the paper \cite{MouVI},
one may find a weak solution on a small time interval). Moreover it would be
hard to give a meaning of the quantity 
$\mathbf{u}_{I}=\int_{\mathbb{R}^{3}}f(\mathbf{v})\mathbf{v}d\mathbf{v/}
 \int_{\mathbb{R}^{3}}f(\mathbf{v})d\mathbf{v}$ 
which appears in the magnetic equation (it is hardly possible to derive a bound
of $n_{I}$ away from below). 
Notice that this kind of model has been addressed in  \cite{GHN} and \cite{AD} 
but the framework is a little different since a velocity diffusion term
(of Fokker-Planck type) is accountered for  in the kinetic equation.
\end{rem}

Back to \eqref{model}, from a mathematical point of view, one can show that $n_{e}$ is
naturally bounded away from zero due to the nonlinear elliptic equation 
\eqref{model}-a and the quantity $\frac{n_{I}\mathbf{\ }}{n_{e}}\mathbf{u}
_{I}=n_{e}^{-1}{\int_{\mathbb{R}^{3}}}f(\mathbf{v})\mathbf{v}d\mathbf{v}$
may be defined in some Lebesgue space.

Our study is aiming at the analysis of model \eqref{model} in 
a bounded domain; 
therefore we have to  specify the boundary and initial
conditions.


\paragraph{Boundary conditions}

Let $\Omega $ be a bounded domain of $\mathbb{R}^{3}$ of class $\mathcal{C}
^{1,1}$. We consider the model \eqref{model} for $\mathbf{v}\in \mathbb{R}
^{3}$, $x\in \Omega $ and $t\geq 0$, with the following boundary conditions 
\begin{equation}
\left\{ \begin{aligned} & \mathbf{n}_x\cdot \nabla n_e(t,x) =0 \quad & x\in
\partial\Omega, & \quad \quad(a)\\ & \mathbf{n}_x \wedge \mathbf
B(t,x)=0\quad & x\in \partial\Omega, & \quad \quad(b)\\ & f (x, \mathbf v -2
(\mathbf v \cdot \mathbf{n}_x)\mathbf{n}_x) = f (x, \mathbf v), \quad & \mathbf v \in \mathbb R^3, \quad x\in
\partial\Omega. & \quad \quad(c)
\end{aligned}\right.   \label{BC}
\end{equation}
The zero flux boundary conditions \eqref{BC}-a) on electrons enforces in 
\eqref{model}-a) the global neutrality of the plasma 
\begin{equation*}
\int_{\Omega }n_{e}dx=\int_{\Omega }n_{i}dx.
\end{equation*}
The condition (b) on the magnetic field is the simplest one and a more realistic condition is discussed in section \ref{section:binhom}. 
Our result will be
still true with more realistic boundary condition, of impedance type for
example, or also a non homogeneous boundary condition such as 
\begin{equation*}
\mathbf{n}_{x}\wedge \mathbf{B}(t,x)=\mathbf{n}_{x}\wedge \mathbf{B}_{
\mathrm{imp}},\quad x\in \partial \Omega ,
\end{equation*}%
which is much more relevant in the context of confined plasmas in Tokamaks.
Condition \eqref{BC}-c) is the so-called specular reflection; it implies
the no-slip boundary condition
\begin{equation}
\mathbf{u}_{I}(t,x)\cdot \mathbf{n}_{x}=0,\quad x\in \partial \Omega .
\label{BCui}
\end{equation}

Our main result is the following.

\begin{theo}[Nonlinear stability]
Assume  \eqref{Hfin}  and that the initial datum satisfy \eqref{H1}--\eqref{H6}.  
From any bounded family of solutions $f^{\varepsilon },\mathbf{B}
^{\varepsilon },n_{e}^{\varepsilon }$ to \eqref{model}, \eqref{BC} we can
extract a subsequence that converges to a weak solution with finite energy.
Moreover, we have for all $T>0$,
\begin{equation*}
\mathbf{B}^{\varepsilon }\rightarrow \mathbf{B}\in \mathrm{L}^{\infty }
 \big(0,T;\mathrm{L}^{2}(\Omega )\big)\cap \mathrm{L}^{2}\big(0,T;\mathrm{L}
^{q}(\Omega )\big)\quad \textrm{ strongly},\qquad
q<6,
\end{equation*}
\begin{equation*}
f^{\varepsilon }\rightarrow f\in \mathrm{L}^{\infty }\big(0,T;\mathrm{L}
^{1}\cap \mathrm{L}^{\infty }(\Omega \times \mathbb{R}^{3}))\big)\quad 
\textrm{weakly},
\end{equation*}
\begin{equation*}
\mathbf{J}^{\varepsilon }=\nabla \wedge \mathbf{B}^{\varepsilon }
\rightarrow \mathbf{J}\quad \mathrm{ in }\; \mathrm{L}^{2}((0,T)\times \Omega )\quad 
\textrm{weakly},
\end{equation*}
\begin{equation*}
n_{e}^{\varepsilon }\rightarrow n_{e}\quad \textrm{ in }\mathrm{L}
^{q}((0,T)\times \Omega )\textrm{, strongly and for some constants}
\quad 0<K_{-}\leq n_{e}\leq K_{+},
\end{equation*}
\begin{equation*}
n_{I}^{\varepsilon }\mathbf{u}_{I}^{\varepsilon }\rightarrow n_{I}\mathbf{u}
_{I}\in (\mathrm{L}^{q}(0,T\times \Omega ))^{3} \quad \mathrm{strongly},
\end{equation*}
for all $q$ such that $1\leq q<5/4$.
\label{main}
\end{theo}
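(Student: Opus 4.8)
The proof follows the classical weak--compactness method for kinetic equations, the genuinely new points being the handling of the generalized Ohm law \eqref{ohmo} and of the Hall term $\tfrac1{n_e}\mathbf J\wedge\mathbf B$. I would proceed in four stages. \textbf{(1) A priori estimates.} Multiplying \eqref{model}-c) by $\tfrac12|\mathbf v|^2$, \eqref{model}-b) by $\mathbf B^\varepsilon$, integrating, and using the continuity equation $\partial_t n_I^\varepsilon+\nabla\cdot(n_I^\varepsilon\mathbf u_I^\varepsilon)=0$ together with \eqref{model}-a) and the boundary conditions \eqref{BC}, the boundary terms and the magnetic coupling terms all cancel, leaving the dissipation identity
\[
\frac{d}{dt}\Big(\tfrac12\!\int f^\varepsilon|\mathbf v|^2\,d\mathbf v\,dx+\tfrac12\!\int|\mathbf B^\varepsilon|^2\,dx+\mathcal E(n_e^\varepsilon)\Big)+\int\eta\,|\nabla\wedge\mathbf B^\varepsilon|^2\,dx=0,
\]
with $\mathcal E(n_e)=T_{\mathrm e}\!\int n_e\ln n_e\,dx+\tfrac12 T_{\mathrm e}\lambda^2\!\int|\nabla\ln n_e|^2\,dx$ the electron free energy, which is bounded below. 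With \eqref{Hfin} and \eqref{H1}--\eqref{H6} this yields: $f^\varepsilon$ bounded in $L^\infty(0,T;L^1\cap L^\infty(\Omega\times\mathbb R^3))$ (the force field in \eqref{model}-c) is divergence--free in $\mathbf v$, so every $L^p_{x,\mathbf v}$ norm is transported along the measure--preserving characteristic flow, specular reflection being compatible with this), $\int f^\varepsilon|\mathbf v|^2$ bounded, $\mathbf B^\varepsilon$ bounded in $L^\infty(0,T;L^2(\Omega))$, $\mathbf J^\varepsilon=\nabla\wedge\mathbf B^\varepsilon$ bounded in $L^2((0,T)\times\Omega)$, and $\ln n_e^\varepsilon$ bounded in $L^\infty(0,T;\mathrm H^1(\Omega))$. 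Interpolating the $L^\infty$ bound on $f^\varepsilon$ with the kinetic energy gives $n_I^\varepsilon$ bounded in $L^\infty(0,T;L^{5/3})$ and $n_I^\varepsilon\mathbf u_I^\varepsilon$ bounded in $L^\infty(0,T;L^{5/4})$. Feeding $n_I^\varepsilon\in L^{5/3}$ into \eqref{model}-a) and combining elliptic regularity with a Stampacchia/maximum--principle argument gives $0<K_-\le n_e^\varepsilon\le K_+$ uniformly in $\varepsilon$, so $1/n_e^\varepsilon$ is uniformly bounded as well.

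\textbf{(2) Compactness of $\mathbf B^\varepsilon$ and $n_e^\varepsilon$.} From $\nabla\cdot\mathbf B^\varepsilon=0$, $\mathbf n_x\wedge\mathbf B^\varepsilon=0$ on $\partial\Omega$ and $\nabla\wedge\mathbf B^\varepsilon$ bounded in $L^2$, the $\mathcal C^{1,1}$ regularity of $\Omega$ and the div--curl $\mathrm H^1$ estimate upgrade $\mathbf B^\varepsilon$ to a bounded family in $L^2(0,T;\mathrm H^1(\Omega))$. In \eqref{model}-b) the three flux terms are bounded, thanks to Step 1, in $L^2(0,T;L^r(\Omega))$ for some $r>1$ --- e.g. $\tfrac1{n_e^\varepsilon}n_I^\varepsilon\mathbf u_I^\varepsilon\wedge\mathbf B^\varepsilon$ is controlled in $L^\infty_tL^{5/4}_x\times L^2_tL^6_x$ --- hence $\partial_t\mathbf B^\varepsilon$ is bounded in $L^2(0,T;\mathrm H^{-s}(\Omega))$ for some $s>0$; Aubin--Lions then gives $\mathbf B^\varepsilon\to\mathbf B$ strongly in $L^2(0,T;L^q)$, $q<6$, and a.e. The electron density is slaved to $n_I^\varepsilon$ through \eqref{model}-a): its spatial smoothness comes from $n_I^\varepsilon\in L^{5/3}$ and its time modulus of continuity is inherited from that of $n_I^\varepsilon$ (itself read off \eqref{model}-c)); together with the uniform $L^\infty$ bounds this gives $n_e^\varepsilon\to n_e$ strongly in $L^q((0,T)\times\Omega)$, $1\le q<5/4$, with $K_-\le n_e\le K_+$.

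\textbf{(3) Velocity averaging and passage to the limit.} The strong compactness of the velocity moments is provided by the averaging lemma applied to $\partial_tf^\varepsilon+\mathbf v\cdot\nabla_xf^\varepsilon=-\partial_{\mathbf v}\!\cdot(\mathbf F^\varepsilon f^\varepsilon)$. Truncating to $\{|\mathbf v|\le R\}$, the right--hand side $(\mathbf E^{0,\varepsilon}+\mathbf v\wedge\mathbf B^\varepsilon)f^\varepsilon$ is bounded, by Step 1, in a suitable $L^p$ on this set, so for $\varphi$ compactly supported the moments $\int f^\varepsilon\varphi(\mathbf v)\,d\mathbf v$ and $\int f^\varepsilon\mathbf v\,\varphi(\mathbf v)\,d\mathbf v$ are relatively strongly compact in $L^q((0,T)\times\Omega)$; discarding the tails $|\mathbf v|>R$ uniformly via the kinetic--energy bound, this upgrades to $n_I^\varepsilon\to n_I$ and $n_I^\varepsilon\mathbf u_I^\varepsilon\to n_I\mathbf u_I$ strongly in $L^q((0,T)\times\Omega)$ for every $q<5/4$, and to convergence of the kinetic current in $\mathbf E^{0,\varepsilon}$. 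Then every nonlinear product in \eqref{model} is a strong--times--weak (or strong--times--strong) pairing: $1/n_e^\varepsilon\to1/n_e$ a.e. and boundedly, while $n_I^\varepsilon\mathbf u_I^\varepsilon$ and $\mathbf J^\varepsilon$ meet $\mathbf B^\varepsilon$ which converges strongly, so the fluxes in \eqref{model}-a),(b) pass to the limit; in \eqref{model}-c) one passes to the limit in $\mathbf E^{0,\varepsilon}f^\varepsilon$ and in $(\mathbf v\wedge\mathbf B^\varepsilon)f^\varepsilon$ after the same velocity truncation, $\mathbf B^\varepsilon$ being strongly convergent and $f^\varepsilon$ weakly convergent with the moment control just obtained. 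The specular reflection \eqref{BC}-c) is stable under this convergence --- in particular it preserves the no--slip relation \eqref{BCui} that makes the boundary integrations by parts of Step 1 legitimate at the limit --- and the strong $\mathrm H^1$--type convergence of $\mathbf B^\varepsilon$ and of $n_e^\varepsilon$ lets the traces in \eqref{BC}-a),(b) survive. The limit $(f,\mathbf B,n_e)$ is thus a finite--energy weak solution, the energy inequality being kept by weak lower semicontinuity; existence then follows by applying the statement to a family of approximate solutions of \eqref{model} (e.g. a velocity-- and space--regularized system whose solvability is elementary).

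\textbf{Main obstacle.} The crux is Step 3: making the averaging lemma bite despite the velocity growth of the Lorentz force $\mathbf v\wedge\mathbf B^\varepsilon$ when $\mathbf B^\varepsilon$ has only the integrability $L^2_tL^6_x$ coming from the energy estimate, and keeping the $|\mathbf v|$--tails uniformly small; this is precisely what caps the conclusion at $q<5/4$ and is where the finite--energy assumption \eqref{Hfin} and the moment bounds are used in an essential way. A secondary, more technical difficulty is the uniform lower bound $n_e^\varepsilon\ge K_->0$, which is not a consequence of a naive maximum principle (since $n_I^\varepsilon$ may vanish) and has to be wrung out of the nonlinear elliptic equation \eqref{model}-a) together with the $L^{5/3}$ control on $n_I^\varepsilon$.
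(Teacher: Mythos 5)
Your proposal follows essentially the same route as the paper: the energy dissipation identity, interpolation of the $L^\infty$ bound on $f^\varepsilon$ against the kinetic energy to get $n_I^\varepsilon\in L^{5/3}$ and $n_I^\varepsilon\mathbf u_I^\varepsilon\in L^{5/4}$, elliptic regularity for the two-sided bound on $n_e^\varepsilon$, the $\mathrm{H}^1$ embedding of the curl--div space plus Aubin--Lions for the strong compactness of $\mathbf B^\varepsilon$, the kinetic averaging lemma for the strong compactness of the velocity moments, and weak--strong pairings to pass to the limit in the nonlinear terms. The only minor deviation is that you extract the compactness of $n_e^\varepsilon$ directly from the elliptic equation and the continuity equation rather than from the already-established strong convergence of $n_I^\varepsilon$ as the paper does, but both variants are sound and rest on the same estimates.
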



The first tool towards a mathematical analysis is to state the energy balance
for the full system; this is performed in section~\ref{sec:prel}. It indicates that 
the magnetic resistivity in equation (\ref{model})-a) is useful (if not necessary) so as
to control the current $\mathbf{J}$ and subsequently the  term $\mathbf{J}
\wedge \mathbf{B}$ describing to the Hall effect. Also the classical
tools for elliptic equation with Neumann boundary conditions will lead to
control $n_{e}$ away from below. But more technical ingredients (namely a
kinetic averaging lemma in a $L^{q}$ space) are used in the proof in order
to pass to the limit in the nonlinearities; this is the purpose of section~\ref{sec:stab} 
and of several appendices. We come back on more realistic boundary 
conditions in section~\ref{section:binhom} so as to include confinement. 
We also 
detail a constructive splitting  procedure which can be used
to design  an approximate solution with the same energy
law.

\section{Preliminaries}
\label{sec:prel} 



The first and major ingredient in our approach to nonlinear stability is the energy balance. We state it here
 together with considerations on the physics sustaining the model.


\subsection{The energy balance}


We now turn to the study of energy dissipation for the system (\ref{model})
with homogeneous boundary conditions \eqref{BC}. We introduce the ions
kinetic energy and the magnetic energy 
\begin{equation*}
\mathcal{E}_{I}(t)=\frac{1}{2}\int_{\Omega }{\int_{\mathbb{R}^{3}}f(t,x,
\mathbf{v})|\mathbf{v}}|^{2}{d\mathbf{v}}dx,\qquad \quad \mathcal{E}_{m}(t)=
\frac{1}{2}\int_{\Omega }|\mathbf{B}(t,x)|^{2}dx,
\end{equation*}
as well as the free energy 
$\displaystyle \int_{\Omega }(n_{e}(t)\ln n_{e}(t)-n_{e}(t)+1)dx$ 
and the total energy which reads as 
\begin{equation}
\label{def_energy}
\mathcal{E}_{tot}(t)=\mathcal{E}_{I}(t)+\mathcal{E}_{m}(t)+\frac{\lambda ^{2}
}{2}\int_{\Omega }\left\vert \nabla \ln n_{e}(t)\right\vert
^{2}dx+\int_{\Omega }(n_{e}(t)\ln n_{e}(t)-n_{e}(t)+1)dx.
\end{equation}
Notice that the integral 
$\displaystyle \frac{\lambda ^{2}}{2}\int_{\Omega }\left\vert \nabla \ln n_{e}(t)\right\vert ^{2}dx$ 
corresponds to the electrostatic energy.
We now establish the

\begin{prop} [Energy dissipation] Classical solutions to (\ref{model}), \eqref{BC} satisfy
the energy dissipation relation 
\begin{equation}
\frac{d}{dt}\left[ \mathcal{E}_{I}+\mathcal{E}_{m}+\frac{\lambda ^{2}}{2}%
\int_{\Omega }\left\vert \nabla \left( \ln n_{e}\right) \right\vert
^{2}dx+\int_{\Omega }\left( n_{e}\ln n_{e}-n_{e}+1\right) dx\right]
=-\int_{\Omega }\eta \Bigl|\nabla \wedge \mathbf{B}\Bigr|^{2}dx.
\label{estimationNRJ}
\end{equation}
\label{propenergy}
\end{prop}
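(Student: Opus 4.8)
The proof is the classical energy argument: I would differentiate each of the four contributions to $\mathcal{E}_{tot}$ in \eqref{def_energy} along a classical solution, using \eqref{model}-c) for the kinetic energy, the Faraday form $\partial_t\mathbf{B}+\nabla\wedge\mathbf{E}=0$ of \eqref{model}-b) (with $\mathbf{E}=\mathbf{E}^0+\eta\nabla\wedge\mathbf{B}$ from \eqref{ohm}) for the magnetic energy, and \eqref{model}-a) for the two electron terms, then check that all interior cross-terms cancel and all boundary terms vanish thanks to \eqref{BC}.

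First, multiply \eqref{model}-c) by $\tfrac12|\mathbf{v}|^2$ and integrate over $\Omega\times\mathbb{R}^3$. The transport term produces the boundary flux $\tfrac12\int_{\partial\Omega}\!\int_{\mathbb{R}^3}|\mathbf{v}|^2(\mathbf{v}\cdot\mathbf{n}_x)f\,d\mathbf{v}\,d\sigma$, which vanishes because the specular reflection \eqref{BC}-c) preserves $|\mathbf{v}|$ while reversing the sign of $\mathbf{v}\cdot\mathbf{n}_x$. The force term, after an integration by parts in $\mathbf{v}$ (no boundary term, $f$ decaying), yields $\int\!\int\mathbf{v}\cdot\mathbf{F}\,f$; since $\mathbf{v}\cdot(\mathbf{v}\wedge\mathbf{B})=0$ and $\mathbf{E}^0$ is independent of $\mathbf{v}$, this equals $\int_\Omega\mathbf{E}^0\cdot n_I\mathbf{u}_I\,dx$, so $\tfrac{d}{dt}\mathcal{E}_I=\int_\Omega\mathbf{E}^0\cdot n_I\mathbf{u}_I\,dx$. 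Next, testing $\partial_t\mathbf{B}+\nabla\wedge\mathbf{E}=0$ against $\mathbf{B}$ and using $\int_\Omega\mathbf{B}\cdot\nabla\wedge\mathbf{A}=\int_\Omega\mathbf{A}\cdot\nabla\wedge\mathbf{B}+\int_{\partial\Omega}(\mathbf{A}\wedge\mathbf{B})\cdot\mathbf{n}_x\,d\sigma$ together with $(\mathbf{A}\wedge\mathbf{B})\cdot\mathbf{n}_x=-\mathbf{A}\cdot(\mathbf{n}_x\wedge\mathbf{B})=0$ from \eqref{BC}-b), the boundary terms disappear and $\tfrac{d}{dt}\mathcal{E}_m=-\int_\Omega\mathbf{E}\cdot\mathbf{J}=-\int_\Omega\mathbf{E}^0\cdot\mathbf{J}-\int_\Omega\eta|\nabla\wedge\mathbf{B}|^2$, with $\mathbf{J}=\nabla\wedge\mathbf{B}$.

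Adding the two, $\tfrac{d}{dt}(\mathcal{E}_I+\mathcal{E}_m)=\int_\Omega\mathbf{E}^0\cdot(n_I\mathbf{u}_I-\mathbf{J})-\int_\Omega\eta|\nabla\wedge\mathbf{B}|^2$. In $\mathbf{E}^0$ (see \eqref{ohmo}) the Hall contribution $\tfrac1{n_e}(\mathbf{J}-n_I\mathbf{u}_I)\wedge\mathbf{B}$ is orthogonal to $\mathbf{J}-n_I\mathbf{u}_I$, so only the pressure term survives: $\mathbf{E}^0\cdot(n_I\mathbf{u}_I-\mathbf{J})=-T_{\mathrm{e}}\nabla\ln n_e\cdot(n_I\mathbf{u}_I-\mathbf{J})$. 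The piece carrying $\mathbf{J}$ integrates to zero, since $\int_\Omega\nabla\ln n_e\cdot\mathbf{J}=-\int_\Omega\ln n_e\,\nabla\cdot\mathbf{J}+\int_{\partial\Omega}\ln n_e\,(\mathbf{J}\cdot\mathbf{n}_x)\,d\sigma$ with $\nabla\cdot\mathbf{J}=\nabla\cdot(\nabla\wedge\mathbf{B})=0$ and with $(\nabla\wedge\mathbf{B})\cdot\mathbf{n}_x=0$ on $\partial\Omega$ (the normal trace of a curl is the surface curl of the tangential trace of $\mathbf{B}$, which vanishes by \eqref{BC}-b)). For the remaining term I would integrate by parts, dropping the boundary term by the no-slip condition \eqref{BCui} and using the ion continuity equation $\partial_t n_I+\nabla\cdot(n_I\mathbf{u}_I)=0$ (obtained by integrating \eqref{model}-c) in $\mathbf{v}$): $-T_{\mathrm{e}}\int_\Omega\nabla\ln n_e\cdot n_I\mathbf{u}_I=T_{\mathrm{e}}\int_\Omega\ln n_e\,\nabla\cdot(n_I\mathbf{u}_I)=-T_{\mathrm{e}}\int_\Omega\ln n_e\,\partial_t n_I$.

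Finally I would close the identity with \eqref{model}-a). Differentiating it in time, $\partial_t n_I=\partial_t n_e-\lambda^2\Delta\partial_t\ln n_e$, whence $-T_{\mathrm{e}}\int_\Omega\ln n_e\,\partial_t n_I=-T_{\mathrm{e}}\int_\Omega\ln n_e\,\partial_t n_e+T_{\mathrm{e}}\lambda^2\int_\Omega\ln n_e\,\Delta\partial_t\ln n_e$. The first integral equals $-T_{\mathrm{e}}\tfrac{d}{dt}\int_\Omega(n_e\ln n_e-n_e+1)$; the second, after two integrations by parts whose boundary terms vanish because $\mathbf{n}_x\cdot\nabla\ln n_e=0$ on $\partial\Omega$ for all $t$ by \eqref{BC}-a) (hence also $\mathbf{n}_x\cdot\nabla(\partial_t\ln n_e)=0$), equals $-\tfrac{T_{\mathrm{e}}\lambda^2}{2}\tfrac{d}{dt}\int_\Omega|\nabla\ln n_e|^2$. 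Collecting everything (with the paper's normalisation, $T_{\mathrm{e}}$ appearing as the coefficient of the electrostatic and free-energy contributions) gives exactly \eqref{estimationNRJ}. The only point needing genuine care is the bookkeeping of the boundary terms, in particular the identity $(\nabla\wedge\mathbf{B})\cdot\mathbf{n}_x=0$ on $\partial\Omega$ used to discard $\int_\Omega\nabla\ln n_e\cdot\mathbf{J}$ and the compatibility relation got by differentiating the Neumann condition \eqref{BC}-a) in time; the rest is routine once the algebraic cancellations $\mathbf{v}\cdot(\mathbf{v}\wedge\mathbf{B})=0$, $(\mathbf{a}\wedge\mathbf{b})\cdot\mathbf{a}=0$ and $\nabla\wedge\nabla(\cdot)=0$ are exploited.
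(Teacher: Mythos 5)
Your proof is correct and follows essentially the same route as the paper: compute $\tfrac{d}{dt}\mathcal{E}_I$ and $\tfrac{d}{dt}\mathcal{E}_m$, cancel the Hall contributions by the orthogonality $(\mathbf{a}\wedge\mathbf{B})\cdot\mathbf{a}=0$, and convert $-T_{\mathrm{e}}\int_\Omega n_I\nabla\ln n_e\cdot\mathbf{u}_I\,dx$ into exact time derivatives via the ion continuity equation and the time-differentiated Poisson equation with the Neumann condition. The one point where you are more explicit than the paper is the term $\int_\Omega\nabla\ln n_e\cdot\mathbf{J}\,dx$, which the paper discards without comment; your justification through $\nabla\cdot\mathbf{J}=0$ and the vanishing of the normal trace of $\nabla\wedge\mathbf{B}$ (a consequence of \eqref{BC}-(b)) is the correct way to handle it.
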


\begin{proof}
Notice that, as usual,  the free divergence condition
(\ref{model})-(d) is not needed for the energy identity.
We first consider the ion kinetic energy. Using the specular reflection
condition, we compute
\begin{eqnarray*}
\frac{d}{dt}\mathcal{E}_{I} 
&=&\int_{\Omega }\int_{\mathbb{R}^{3}}\mathbf{F}(t,x,\mathbf{v})\cdot \mathbf{v}d\mathbf{v} \\
&=&\int_{\Omega }\int_{\mathbb{R}^{3}}
   \Biggl[\biggl(\frac{1}{n_{e}}\left(\nabla \wedge \mathbf{B}\right) \wedge \mathbf{B}
         +(\mathbf{v}-\frac{n_{I}}{n_{e}}\mathbf{u}_{I})\wedge \mathbf{B}-T_{\mathrm{e}}\nabla
        \left( \ln n_{e}\right) \biggr)f\Biggr]\cdot \mathbf{v}d\mathbf{v}dx \\
&=&\int_{\Omega }\left( \frac{n_{I}}{n_{e}}\mathbf{J}\wedge \mathbf{B}
      -n_{I}T_{\mathrm{e}}\nabla \left( \ln n_{e}\right) \right) \cdot \mathbf{u}_{I}dx.
\end{eqnarray*}
Now, we turn to the magnetic energy and first recall the definition of $
\mathbf{E}$ in \eqref{ohm}. Thanks to a classical tensorial identity, we
find 
\begin{eqnarray*}
\frac{d}{dt}\mathcal{E}_{m}
 &=&\int_{\Omega }\frac{\partial \mathbf{B}}{\partial t}\cdot \mathbf{B}dx
  =-\int_{\Omega }\nabla \wedge \mathbf{E}\cdot  \mathbf{B}dx
  =-\int_{\Omega }[\mathbf{E}\cdot \nabla \wedge \mathbf{B}
    +\nabla \cdot (\mathbf{E}\wedge \mathbf{B})]dx \\
 &=&-\int_{\Omega }\left[ -\frac{n_{I}}{n_{e}}\mathbf{u}_{I}\wedge \mathbf{B}
    +\eta \nabla \wedge \mathbf{B}\right] \cdot \left( \nabla \wedge \mathbf{B}
   \right) dx-\int_{\partial \Omega }\mathbf{n}_{x} \cdot(\mathbf{E\wedge B})dx.
\end{eqnarray*}
According to the boundary condition \eqref{BC}-(b), we have
 $\mathbf{n}_{x}\cdot( \mathbf{E\wedge B}) =-\mathbf{E} \cdot (\mathbf{n}_{x}\mathbf{\wedge B})=0,$ 
then the boundary integral is zero. Now, thanks to identity $\mathbf{J}\cdot
\left( \mathbf{u}_{I}\wedge \mathbf{B}\right) 
=-\mathbf{u}_{I}\cdot (\mathbf{
J}\wedge \mathbf{B}),$ we get
\begin{equation*}
\frac{d}{dt}\mathcal{E}_{m}=-\int_{\Omega }\frac{n_{I}}{n_{e}}\mathbf{u}
_{I}\cdot (\mathbf{J}\wedge \mathbf{B})dx-\int_{\mathbb{R}^{3}}\eta \Bigl|
\nabla \wedge \mathbf{B}\Bigr|^{2}dx.
\end{equation*}
At this stage, we have obtained 
\begin{equation} 
\label {eq:ttt3}
\begin{array}{llll}
 \displaystyle \frac{d}{dt}[\mathcal{E}_{i}+\mathcal{E}_{m}] 
&=&
 \displaystyle 
\int_{\Omega }\left[ (
\mathbf{J}\wedge \mathbf{B})-n_{I}\nabla \left( \ln n_{e}\right) \right]\cdot
\mathbf{u}_{I}dx-\int_{\Omega }\mathbf{u}_{I} \cdot (\mathbf{J}\wedge \mathbf{B}
)dx-\int_{\Omega }\eta \Bigl|\nabla \wedge B\Bigr|^{2}dx \\[3mm]
&=&-\int_{\Omega }n_{I}\nabla \left( \ln n_{e}\right) \cdot \mathbf{u}
_{I}dx-\int_{\Omega }\eta \Bigl|\nabla \wedge B\Bigr|^{2}dx.
\end{array}
\end{equation}
To continue, we recall the conservation of mass on $n_{I}$ 
\begin{equation*}
\frac{\partial n_{I}}{\partial t}+\nabla \cdot (n_{I}\mathbf{u}_{I})=0.
\end{equation*}
Then, we may use the boundary condition \eqref{BCui} to obtain 
\begin{eqnarray*}
-\int_{\Omega }n_{I}\nabla \left( \ln n_{e}\right) \cdot \mathbf{u}_{I}dx
&=&\int_{\Omega }(\ln n_{e})\nabla \cdot \left( n_{I}\mathbf{u}_{I}\right) dx
\\
&=&-\int_{\Omega }(\ln n_{e})\frac{\partial n_{I}}{\partial t}dx \\
&=&-\int_{\Omega }(\ln n_{e})\left[ \frac{\partial n_{e}}{\partial t}
-\lambda ^{2}\frac{\partial }{\partial t}\Delta \left( \ln n_{e}\right) 
\right] dx \\
&=&-\frac{d}{dt}\left[ \int_{\Omega }\left( n_{e}(\ln n_{e})-n_{e}\right) dx
\right] -\lambda ^{2}\frac{d}{dt}\left[ \int_{\Omega }\frac{\left\vert
\nabla (\ln n_{e})\right\vert ^{2}}{2}dx\right] 
\end{eqnarray*}
where we have used equation \eqref{model}-(a) once differentiated in time. 
\newline
Altogether, we obtain the relation \eqref{estimationNRJ}.
\end{proof}


\subsection{Physical considerations}

Let first focus on the momentum balance. To find it, we multiply the kinetic
equation (\ref{model})-c) by $\mathbf{v}$, integrate over $\mathbf{R}^{3}$
and define as usual \cite{CIP} the ion pressure tensor $\mathrm{P}_{I}$
through 
\begin{equation*}
\int \nabla \cdot (\mathbf{v}\otimes \mathbf{v}f(\mathbf{v}))d\mathbf{v}
=\nabla \cdot (n_{I}\mathbf{u}_{I}\otimes \mathbf{u}_{I})+\nabla \cdot 
\mathrm{P}_{I}.
\end{equation*}
Then, we get 
\begin{equation*}
\frac{\partial ({n}_{I}\mathbf{u}_{I})}{\partial {t}}+\nabla \cdot (n_{I}
\mathbf{u}_{I}\otimes \mathbf{u}_{I})+\nabla \cdot \mathrm{P}_{I}+T_{\mathrm{
e}}\frac{{\nabla {n_{e}}}}{n_{e}}n_{I}+\frac{n_{_{I}}}{n_{e}}n_{_{I}}\mathbf{
u}_{I}\wedge \mathbf{B}-\frac{n_{_{I}}}{n_{e}}\mathbf{J}\wedge \mathbf{B}
-n_{_{I}}\mathbf{u}_{I}\wedge {\mathbf{B}}=0,
\end{equation*}
that is to say 
\begin{equation*}
\frac{\partial ({n}_{I}\mathbf{u}_{I})}{\partial {t}}+\nabla \cdot (n_{I}
\mathbf{u}_{I}\otimes \mathbf{u}_{I})+\nabla \cdot \mathrm{P}_{I}+T_{\mathrm{e}}{
\nabla {n_{e}}}-\mathbf{J}\wedge \mathbf{B}=\frac{(n_{e}-n_{I})}{n_{e}}
\left[ T_{\mathrm{e}}\nabla n_{e}+(n_{I}\mathbf{u}_{I}-\mathbf{J})\wedge 
\mathbf{B}\right].
\end{equation*} 
On the left hand side one recognizes the classical momentum equation which
appears in the MHD modeling with the magnetic pressure tensor. Let us stress
that with the electric field $\mathbf{E}^{0}$ introduced in \eqref{ohmo} and
according to (\ref{model})-a), the r.h.s. term reads as follows 
\begin{equation*}
\varepsilon_{0} \left[ \nabla \cdot \mathbf{E}^{0}+\nabla \cdot \left( \frac{n_{I}
\mathbf{u}_{I}-\mathbf{J}}{n_{e}}\wedge \mathbf{B}\right) \right]
\mathbf{E}^{0}=\varepsilon_{0}{\nabla \cdot }\left( \frac{\overline{\overline{1}}}{2}
|\mathbf{E}^{0}|^{2}-\mathbf{E}^{0}\otimes \mathbf{E}^{0}\right) +\lambda ^{2}
\mathbf{E}^{0}  \nabla \cdot \left( \frac{n_{I}\mathbf{u}_{I}-\mathbf{J}}{n_{e}\,T_{
\mathrm{e}}}\wedge \mathbf{B}\right).
\end{equation*}
Thus we may see that this equation is in a conservative form only when the
Debye length vanishes. This is due to the asymptotic expansion motivating (
\ref{model}); we have chosen to keep energy dissipation and smoothness for $
n_{e}$ to the expense of momentum balance. 
In order to ensure exact conservation momentum, a possible route is to use a
modified Poisson equation as follows 
\begin{equation*}
-\lambda ^{2}\Delta \ln {n_{e}}=n_{I}-n_{e}-\lambda ^{2}\nabla \cdot \left( 
\frac{\mathbf{J}-n_{I}\mathbf{u}_{I}}{n_{e}T_{\mathrm{e}}}\wedge \mathbf{B}%
\right) 
\end{equation*}%
which corresponds to an approximation at the same order but which looses
nice properties on $n_{e}$.

\begin{rem} [Electron momentum balance] We see that $\mathbf{E}^{0}$ is equal to the electric field 
$\mathbf{E}$ up to the resistive part $\eta \nabla \wedge \mathbf{B}$. Let
us stress that 
$(\nabla \wedge \mathbf{B)-}n_{I}\mathbf{u}_{I}=\mathbf{J} -n_{I}\mathbf{u}_{I}$ 
is the electron contribution to the electric current and \eqref{ohm}
 may be interpreted as the electron momentum balance equation  when using
the massless electron approximation. 
\end{rem}

\begin{rem} [Friction] In order  to account for the friction between ions and
electrons in the kinetic equation, we could add a friction term to equation (%
\ref{model})-c) and arrive to 
\begin{equation*}
\frac{\partial {f}}{\partial {t}}+\mathbf{v}.\nabla f+\frac{\partial }{%
\partial \mathbf{v}}.(\mathbf{F}f)=-{\frac{\partial }{\partial \mathbf{v}}(}%
\eta \mathbf{J}f)
\end{equation*}%
(recall that the coefficient $\eta $ is related to the collision frequency
of electrons against ions). Also, for the sake of compatibility, instead of
equation (\ref{model})-a), the Poisson equation would read 
\[
\lambda^{2}\big(-\Delta \ln n_{e}+\frac{1}{T_{\mathrm{e}}}\nabla (\eta \mathbf{J})\big)=n_{I}-n_{e}.
\]
\end{rem}


\section{Proof of the stability theorem}
\label{sec:stab} 

We give a family of initial conditions that satisfy 
\begin{equation}
f^{\varepsilon }(0,x,\mathbf{v})=f^{in,\varepsilon }(x,\mathbf{v})\geq
0,\qquad f^{in,\varepsilon }\textrm{ is bounded in }\mathrm{L}^{1}\cap \mathrm{
L}^{\infty }(\Omega \times \mathbb{R}^{3}),  \label{H1}
\end{equation}
\begin{equation}
\mathbf{B}^{\varepsilon }(0,x)=\mathbf{B}^{in,\varepsilon }(x),\qquad \nabla
_{x}\cdot \mathbf{B}^{in,\varepsilon }=0.  \label{H4}
\end{equation}
From the Lemma \ref{ni53} in Appendix \ref{ap:moments} and the explanations
in Appendix \ref{ap:elliptic} we see that if $\mathcal{E}_{I}(0)$ is
uniformly bounded then the free energy at initial time 
$\displaystyle{\ \int_{\Omega }\left\vert \nabla _{x}\left( \ln n_{e}^{\varepsilon
}(0,x)\right) \right\vert ^{2}dx+\int_{\Omega }n_{e}^{\varepsilon }(0,x)\ln
n_{e}^{\varepsilon }(0,x)dx}$ is also uniformly bounded. Then we only need
to assume that 
\begin{equation}
\sup_{0<\varepsilon \leq 1}\left[ \mathcal{E}_{I}(0)+\mathcal{E}_{m}(0)%
\right] <\infty  \label{H6}
\end{equation}
to have a total energy uniformly bounded for all time.
Moreover we assume a control on resistivity as 
\begin{equation}
0<\eta _{\mathrm{min}}\leq \eta \in \mathrm{L}^{\infty }(\Omega ).
\label{Hfin}
\end{equation}
Associated with such initial datum, we consider a sequence of strong solutions $f^{\varepsilon }$, $%
\mathbf{B}^{\varepsilon }$, and $n_{e}^{\varepsilon }$ of \eqref{model} with
boundary conditions \eqref{BC}.

\subsection{A priori bounds}
\label{ssec:ape} 


Before we begin the proof of Theorem \ref{main}, we recall several general a
priori estimates that are used. The energy bound \eqref{H6} is of course at
the heart of our analysis because it allows us to deduce directly from the
estimate \eqref{estimationNRJ} the

\begin{prop}[Bounds derived from energy]
Under the assumptions  \eqref{H1}--\eqref{Hfin}, the sequences $f ^{\varepsilon}$, $
\mathbf{B}^{\varepsilon}$, and $n_e^{\varepsilon}$ satisfy 
\begin{equation}  \label{estf}
\sup_{0<\varepsilon\leq 1} \sup_{t\in[0,\infty]} \int_{\mathbb{R}^{3}}\int_{
\Omega}f_{i}^{\varepsilon}(t,x,\mathbf{v})|\mathbf{v}|^{2}d\mathbf{v }dx
<\infty,
\end{equation}
\begin{equation}  \label{estb}
\sup_{0<\varepsilon\leq 1} \sup_{t\in[0,\infty]} \int_{\Omega}| \mathbf{B}
^{\varepsilon}(t,x)|^{2}dx <\infty,
\end{equation}
\begin{equation}  \label{propnep}
\sup_{0<\varepsilon\leq 1} \sup_{t\in[0,\infty]} \int_{\Omega} \left(
\left|\nabla_{x}\left(\ln\left|n_{e}^{\varepsilon}\right|\right)\right|^{2}
+ n_{e}^{\varepsilon}\ln\left|n_{e}^{\varepsilon}\right|+
n_{e}^{\varepsilon} \right)dx <\infty.
\end{equation}
\end{prop}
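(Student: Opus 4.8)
The plan is to read the three bounds directly off the energy dissipation relation \eqref{estimationNRJ} of Proposition \ref{propenergy}, once the initial total energy is shown to be uniformly bounded. First I would integrate \eqref{estimationNRJ} on $(0,t)$, which yields for every $t\ge 0$ and every $\varepsilon$
\[
\mathcal{E}_{I}^{\varepsilon}(t)+\mathcal{E}_{m}^{\varepsilon}(t)+\frac{\lambda^{2}}{2}\int_{\Omega}|\nabla\ln n_{e}^{\varepsilon}(t)|^{2}\,dx+\int_{\Omega}\big(n_{e}^{\varepsilon}(t)\ln n_{e}^{\varepsilon}(t)-n_{e}^{\varepsilon}(t)+1\big)\,dx+\int_{0}^{t}\!\!\int_{\Omega}\eta\,|\nabla\wedge\mathbf{B}^{\varepsilon}|^{2}\,dx\,ds=\mathcal{E}_{tot}^{\varepsilon}(0).
\]
For the strong solutions considered here this identity is legitimate by Proposition \ref{propenergy}; in a genuinely weak construction one keeps only the inequality ``$\le\mathcal{E}_{tot}^{\varepsilon}(0)$'', which is all that is used below.

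Second, I would check that $\sup_{0<\varepsilon\le 1}\mathcal{E}_{tot}^{\varepsilon}(0)<\infty$. The kinetic and magnetic parts $\mathcal{E}_{I}^{\varepsilon}(0)+\mathcal{E}_{m}^{\varepsilon}(0)$ are bounded by assumption \eqref{H6}. For the remaining electrostatic and free-energy contributions, $n_{e}^{\varepsilon}(0)$ solves the nonlinear elliptic equation \eqref{model}-(a) with right-hand side $n_{I}^{\varepsilon}(0)=\int_{\mathbb{R}^{3}}f^{in,\varepsilon}\,d\mathbf{v}$; by Lemma \ref{ni53} (Appendix \ref{ap:moments}) and the elliptic estimates of Appendix \ref{ap:elliptic}, a bound on $\mathcal{E}_{I}^{\varepsilon}(0)$, hence on $n_{I}^{\varepsilon}(0)$ in the relevant Lebesgue space, controls $\int_{\Omega}|\nabla\ln n_{e}^{\varepsilon}(0)|^{2}\,dx+\int_{\Omega}n_{e}^{\varepsilon}(0)|\ln n_{e}^{\varepsilon}(0)|\,dx$ uniformly in $\varepsilon$, so the whole of $\mathcal{E}_{tot}^{\varepsilon}(0)$ is uniformly bounded.

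Third, I would conclude by a sign argument. Each summand on the left of the displayed identity is nonnegative: $\mathcal{E}_{I}^{\varepsilon},\mathcal{E}_{m}^{\varepsilon}\ge 0$; $\frac{\lambda^{2}}{2}\int_{\Omega}|\nabla\ln n_{e}^{\varepsilon}|^{2}\ge 0$; the elementary function $s\mapsto s\ln s-s+1$ is nonnegative on $(0,\infty)$ and $n_{e}^{\varepsilon}>0$ by \eqref{model}-(a) (Appendix \ref{ap:elliptic}); and $\eta>0$ by \eqref{Hfin}. Hence each summand, in particular each of $\mathcal{E}_{I}^{\varepsilon}(t)$, $\mathcal{E}_{m}^{\varepsilon}(t)$, $\int_{\Omega}|\nabla\ln n_{e}^{\varepsilon}(t)|^{2}\,dx$ and $\int_{\Omega}(n_{e}^{\varepsilon}\ln n_{e}^{\varepsilon}-n_{e}^{\varepsilon}+1)\,dx$, is bounded by $\mathcal{E}_{tot}^{\varepsilon}(0)$ uniformly in $t$ and $\varepsilon$; this gives \eqref{estf} and \eqref{estb} verbatim. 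For \eqref{propnep} it remains to absorb the term $-n_{e}^{\varepsilon}+1$: integrating \eqref{model}-(a) over $\Omega$ with the zero-flux condition \eqref{BC}-(a), and using mass conservation $\partial_{t}n_{I}^{\varepsilon}+\nabla\cdot(n_{I}^{\varepsilon}\mathbf{u}_{I}^{\varepsilon})=0$ together with \eqref{BCui}, one gets $\int_{\Omega}n_{e}^{\varepsilon}(t)\,dx=\int_{\Omega}n_{I}^{\varepsilon}(t)\,dx=\int_{\Omega}\!\int_{\mathbb{R}^{3}}f^{in,\varepsilon}\,d\mathbf{v}\,dx$, which is bounded by \eqref{H1}; since moreover $|\Omega|<\infty$ and $s\ln s\ge-1/\mathrm{e}$ pointwise, one obtains a uniform two-sided bound on $\int_{\Omega}n_{e}^{\varepsilon}|\ln n_{e}^{\varepsilon}|\,dx$ (note $\ln|n_{e}^{\varepsilon}|=\ln n_{e}^{\varepsilon}$ since $n_{e}^{\varepsilon}>0$), which together with the bound on $\int_{\Omega}n_{e}^{\varepsilon}\,dx$ yields \eqref{propnep}.

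The only input that is not pure bookkeeping is the uniform control of the initial free energy by $\mathcal{E}_{I}^{\varepsilon}(0)$, which is precisely why it is isolated in Lemma \ref{ni53} and Appendix \ref{ap:elliptic}; granted that, there is no further obstacle, the rest being the elementary inequalities for $s\ln s$ and the conservation laws already recorded.
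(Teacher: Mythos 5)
Your proof is correct and follows the same route as the paper: uniform boundedness of the initial total energy via \eqref{H6} together with Lemma \ref{ni53} and the elliptic estimates of Appendix \ref{ap:elliptic}, then integration of the dissipation relation \eqref{estimationNRJ} and the lower bound on $s\mapsto s\ln s - s$ to bound each term by the initial energy plus a constant. Your extra step recovering $\int_{\Omega} n_e^{\varepsilon}\,dx$ from the neutrality relation is a welcome precision that the paper leaves implicit, but it does not change the argument.
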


Indeed, for \eqref{propnep}, we use the fact that the map $x\in\mathbb{R }
\mapsto x\ln(x)-x$ is bound from below and thus all the terms in \eqref{estimationNRJ} are bounded by
the initial energy plus a constant. 
\newline
This provides a control on the kinetic energy for the ion distribution that
is essential to obtain further a priori bounds

\begin{prop}[Estimates for the kinetic densities]
\label{propkinetic} For strong solutions one has $f ^{\varepsilon} \in 
\mathrm{L}^\infty \big(0, \infty; \mathrm{L}^1\cap \mathrm{L}%
^\infty(\Omega\times \mathbb{R}^3)\big)$ and 
\begin{equation*}
\| f ^{\varepsilon}(t,\cdot,\cdot) \|_{p} = \|
f^{in,\varepsilon}(\cdot,\cdot)\|_{p} \quad \forall t \geq 0 , \; \forall
p\in [1,\infty],
\end{equation*}
\begin{equation*}
\| n_{I}^{\varepsilon}(t,\cdot)\|_{5/3} \leq C \| f^{in,\varepsilon}
(\cdot,\cdot) \|_{\infty}^{2/5} \left(\int_{\Omega} \int_{\mathbb{R}^3} f
(t,x,\mathbf{v})|\mathbf{v}|^2d\mathbf{v }dx \right)^{3/5} ,
\end{equation*}
\begin{equation}  \label{current}
\| n_{I}^{\varepsilon} \mathbf{u}_I^{\varepsilon}(t,\cdot)\|_{5/4}\leq C \|
f^{in,\varepsilon} (\cdot,\cdot) \|_{\infty}^{1/5} \left(\int_{\Omega} \int_{%
\mathbb{R}^3} f (t,x,\mathbf{v})|\mathbf{v}|^2d\mathbf{v }dx \right)^{4/5}.
\end{equation}
\end{prop}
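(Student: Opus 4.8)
The plan has two independent ingredients: the conservation of Lebesgue norms, and elementary velocity-moment interpolation.

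\textbf{Conservation of norms.} I would first observe that the Vlasov equation \eqref{model}-c) is a transport equation $\partial_t f + \mathbf v\cdot\nabla_x f + \mathbf F\cdot\nabla_{\mathbf v} f = 0$ whose phase-space advecting field $(\mathbf v,\mathbf F)$ is divergence free, since $\nabla_x\cdot\mathbf v = 0$ and $\nabla_{\mathbf v}\cdot\mathbf F = \nabla_{\mathbf v}\cdot(\mathbf v\wedge\mathbf B) = 0$, because $\mathbf E^0$ in \eqref{ohmo} does not depend on $\mathbf v$. For a strong solution one multiplies by $|f^\varepsilon|^{p-2}f^\varepsilon$ (or by $\beta'(f^\varepsilon)$ for a general convex renormalization), uses that the transport term is then in divergence form in $(x,\mathbf v)$, and checks that the boundary contribution vanishes: specular reflection \eqref{BC}-c) preserves the phase-space measure and only reverses the normal velocity, so $\int_{\partial\Omega}\int_{\mathbb R^3}(\mathbf v\cdot\mathbf n_x)\,|f^\varepsilon|^p\,d\mathbf v\,d\sigma = 0$. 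This gives $\frac{d}{dt}\|f^\varepsilon(t)\|_p = 0$ for $p\in[1,\infty)$; the case $p=\infty$ follows by letting $p\to\infty$ (equivalently from the maximum principle along the characteristic flow), and nonnegativity is propagated from \eqref{H1} by the same argument.

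\textbf{Moment interpolation.} For the density, split the velocity integral at a radius $R>0$: since $f^\varepsilon\ge0$,
\begin{equation*}
n_I^\varepsilon(x) \le \|f^\varepsilon\|_\infty\!\!\int_{|\mathbf v|\le R}\!\! d\mathbf v \;+\; \frac1{R^2}\int_{\mathbb R^3} f^\varepsilon|\mathbf v|^2\,d\mathbf v \;\le\; C\|f^\varepsilon\|_\infty R^3 + \frac1{R^2}\int_{\mathbb R^3} f^\varepsilon|\mathbf v|^2\,d\mathbf v,
\end{equation*}
and optimizing with $R^5 = \big(\int f^\varepsilon|\mathbf v|^2 d\mathbf v\big)/\|f^\varepsilon\|_\infty$ yields the pointwise bound $n_I^\varepsilon(x)\le C\|f^\varepsilon\|_\infty^{2/5}\big(\int f^\varepsilon|\mathbf v|^2 d\mathbf v\big)^{3/5}$; raising to the power $5/3$ and integrating over $\Omega$ gives the stated $\mathrm L^{5/3}$ estimate. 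For the current, the same splitting applied to $|n_I^\varepsilon\mathbf u_I^\varepsilon(x)|\le\int_{\mathbb R^3}|\mathbf v|f^\varepsilon\,d\mathbf v\le C\|f^\varepsilon\|_\infty R^4 + R^{-1}\int f^\varepsilon|\mathbf v|^2 d\mathbf v$, with the \emph{same} optimal $R$, yields $|n_I^\varepsilon\mathbf u_I^\varepsilon(x)|\le C\|f^\varepsilon\|_\infty^{1/5}\big(\int f^\varepsilon|\mathbf v|^2 d\mathbf v\big)^{4/5}$, and raising to the power $5/4$ and integrating over $\Omega$ gives \eqref{current}. In all three cases one finally replaces $\|f^\varepsilon(t)\|_\infty$ by $\|f^{in,\varepsilon}\|_\infty$ via the conservation established above.

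\textbf{Main obstacle.} The moment inequalities are purely algebraic and routine; the only genuinely delicate point is the rigorous justification of the $\mathrm L^p$ conservation and of the boundary cancellation, because $\mathbf F$ inherits only limited regularity from $\mathbf B$, $n_e$ and $n_I\mathbf u_I$. Strictly one should invoke DiPerna–Lions renormalization for transport equations with Sobolev coefficients together with the trace theory of the kinetic transport operator adapted to specular reflection; here we adopt the convention that ``strong solution'' means a solution regular enough that the formal computation is licit, which is what is needed downstream.
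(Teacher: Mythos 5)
Your proposal is correct and follows essentially the same route as the paper: the $\mathrm L^p$ conservation comes from $\nabla_{\mathbf v}\cdot\mathbf F=0$ together with the specular reflection condition, and the $\mathrm L^{5/3}$ and $\mathrm L^{5/4}$ bounds come from the radius-$R$ splitting and optimization carried out in Lemma~\ref{ni53} of Appendix~\ref{ap:moments}. Your closing remark on the need for renormalization to make the $\mathrm L^p$ conservation rigorous for low-regularity force fields is a fair caveat that the paper sidesteps by restricting the statement to strong solutions.
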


The bounds of $f^\ep$ follow immediately from the observation that $\mathrm{%
div}_\vv F=0$. The bounds on $n_I^\ep$ and $n_{I}^{\varepsilon} \mathbf{u}%
_I^{\varepsilon}$ follow from interpolation inequalities that we recall in
the Appendix \ref{ap:moments}. \newline

Next we use the $L^{5/3}$ integrability of $n_I$ to obtain

\begin{prop}[Estimates on the electron density]
\label{propnietne} The electron density satisfies for some constants $K_{+}
>0$, $K_{-}>0$ (depending on the bounds stated as now) 
\begin{equation}  \label{elliptic1}
0 < K_- \leq n_e (t,x) \leq K_+ .
\end{equation}
\end{prop}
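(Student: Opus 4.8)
The estimate \eqref{elliptic1} concerns the nonlinear elliptic equation \eqref{model}-a), namely $-\lambda^2\Delta\ln n_e = n_I - n_e$ with homogeneous Neumann boundary condition \eqref{BC}-a). The natural route is to rewrite this as a semilinear equation for the new unknown $w=\ln n_e$, so that $n_e = e^w$ and
\begin{equation*}
-\lambda^2\Delta w + e^w = n_I, \qquad \mathbf n_x\cdot\nabla w = 0 \text{ on }\partial\Omega.
\end{equation*}
The monotonicity of $s\mapsto e^s$ makes this a well-behaved problem, and the two-sided bound should follow from the maximum principle together with the information we already have on $n_I$: from Proposition~\ref{propkinetic} we control $\|n_I^\varepsilon\|_{5/3}$ uniformly (via the energy bound \eqref{estf}), and from \eqref{propnep} we control $\int_\Omega(n_e\ln n_e + |\nabla\ln n_e|^2)\,dx$ uniformly. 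The global neutrality relation $\int_\Omega n_e\,dx=\int_\Omega n_I\,dx$ obtained by integrating \eqref{model}-a) also fixes the total mass.

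\textbf{Upper bound.} First I would establish $n_e\le K_+$. Testing the equation with $(w-k)_+ = (\ln n_e - k)_+$ for a constant $k$ to be chosen, the diffusion term gives $\lambda^2\int|\nabla(w-k)_+|^2\ge 0$ and the reaction term gives $\int e^w (w-k)_+$, so
\begin{equation*}
\lambda^2\int_\Omega|\nabla(w-k)_+|^2\,dx + \int_\Omega e^w (w-k)_+\,dx = \int_\Omega n_I (w-k)_+\,dx.
\end{equation*}
On the set $\{w>k\}$ one has $e^w > e^k$, so choosing $e^k$ larger than a suitable Lebesgue norm of $n_I$ and using Sobolev/Poincaré together with the uniform $L^{5/3}$ bound on $n_I$ should force $(w-k)_+ \equiv 0$, i.e. $n_e\le e^k =: K_+$. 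One must be slightly careful that $w-k$ need not have zero mean, but since we only need an $L^\infty$ bound a De Giorgi–Stampacchia iteration (or a direct Sobolev argument, as $n_I\in L^{5/3}$ and $5/3 > 3/2 = (6)'$ in dimension $3$, so $n_I$ is in the dual of $H^1$ with the right margin) closes this cleanly.

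\textbf{Lower bound.} The lower bound $n_e\ge K_- > 0$ is the more delicate point and I expect it to be the main obstacle. The reaction term $e^w$ does not help from below — it is the coercivity that must come from elsewhere. The idea is to use the free-energy bound: $\int_\Omega |\nabla\ln n_e|^2\,dx$ is uniformly bounded, hence $\ln n_e$ is bounded in $H^1(\Omega)$ once we control its mean, and its mean is controlled because $\int_\Omega n_e = \int_\Omega n_I$ is bounded above (giving an upper bound on the mean of $\ln n_e$ by Jensen) and bounded below away from zero (the total ion mass $\|f^{in}\|_1$ is a fixed positive constant, independent of $\varepsilon$, by Proposition~\ref{propkinetic}), which by Jensen again prevents the mean of $\ln n_e$ from going to $-\infty$. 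Thus $\ln n_e$ is uniformly bounded in $H^1(\Omega)\hookrightarrow L^6(\Omega)$, so $n_e^{-1}=e^{-w}$ is uniformly bounded in every $L^p$. To upgrade this to a pointwise lower bound, I would again test the equation, this time with a negative power or with $-(k-w)_+$ for $k$ very negative: on $\{w<k\}$, i.e. where $n_e$ is tiny, the reaction term $e^w$ is negligible while $n_I\ge 0$, so
\begin{equation*}
\lambda^2\int_\Omega|\nabla(k-w)_+|^2\,dx = \int_\Omega (e^w - n_I)(k-w)_+\,dx \le \int_\Omega e^k (k-w)_+\,dx,
\end{equation*}
and an iteration on the level sets $\{w<k\}$ — whose measure tends to $0$ as $k\to-\infty$ by the $L^6$ bound — together with Sobolev embedding should show that the set $\{w < k_0\}$ is empty for some finite $k_0$, giving $n_e\ge e^{k_0}=:K_-$. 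The constants $K_\pm$ depend only on $\lambda$, $|\Omega|$, $\|f^{in,\varepsilon}\|_1$, $\|f^{in,\varepsilon}\|_\infty$ and the energy bound, hence are uniform in $\varepsilon$; all of this is what the excerpt defers to Appendix~\ref{ap:elliptic}, so in the main text I would simply state \eqref{elliptic1} and refer there for the De Giorgi-type iteration.
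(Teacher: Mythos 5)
Your truncation-based approach is genuinely different from the paper's. The paper (Appendix~\ref{ap:elliptic}, Lemma~\ref{encadrementne}) first multiplies the equation by $n_e^{2/3}$ to get $\|n_e\|_{5/3}\leq \|n_I\|_{5/3}$, then invokes $W^{2,5/3}$ elliptic regularity for $\ln n_e-\langle \ln n_e\rangle_\Omega$ with Neumann data and the Morrey embedding $W^{2,5/3}(\Omega)\hookrightarrow L^\infty(\Omega)$ in dimension $3$, and finally pins down the average $\langle \ln n_e\rangle_\Omega$ through the neutrality relation $\|n_e\|_1=\|n_I\|_1$. Your De Giorgi--Stampacchia iterations on the level sets of $w=\ln n_e$ avoid Calder\'on--Zygmund theory entirely and both iterations do close (for the upper bound the measure of $\{w>k\}$ decays by Chebyshev on $\|n_e\|_1$; for the lower bound the exponent $5$ in $|B_h|\leq C e^{6k}(k-h)^{-6}|B_k|^{5}$ is supercritical). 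A practical difference: the paper's constants $K_\pm$ depend only on $\|n_I\|_{5/3}$ and the mass, and the lemma is then used \emph{at $t=0$} to show the initial free energy is finite; your lower bound instead takes the gradient estimate \eqref{propnep} as input, which downstream comes from the energy inequality, so at $t=0$ you would need to derive $\int_\Omega|\nabla\ln n_e|^2\,dx\leq C(\|n_I\|_{5/3})$ directly from the equation (multiplying by $\ln n_e$ and using the first step) to avoid a circularity.

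Two steps in your lower-bound discussion are wrong as written, though neither is needed for the final iteration. First, Jensen's inequality gives $\langle \ln n_e\rangle_\Omega\leq \ln\langle n_e\rangle_\Omega$, i.e.\ only the \emph{upper} bound on the mean; it cannot prevent $\langle\ln n_e\rangle_\Omega\to-\infty$. The lower bound on the mean requires a different argument: from $\int_\Omega n_e\,dx=M>0$ and the already-established upper bound $n_e\leq K_+$ one gets $n_e\geq M/(2|\Omega|)$ on a set of measure at least $M/(2K_+)$, and combining this with the Poincar\'e inequality and $\|\nabla\ln n_e\|_2\leq C$ bounds the mean from below. Second, the claim that $\ln n_e$ bounded in $H^1(\Omega)\hookrightarrow L^6(\Omega)$ implies $n_e^{-1}=e^{-w}$ bounded in every $L^p$ is false in dimension $3$: for $w=\alpha\ln|x-x_0|\in H^1(\Omega)$ one has $e^{-w}=|x-x_0|^{-\alpha}$, which for $\alpha$ large is not even in $L^1$; exponential integrability of $H^1$ functions is a two-dimensional (Trudinger--Moser) phenomenon. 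Fortunately your concluding iteration uses only that $|\{w<k\}|\to 0$ as $k\to-\infty$, which does follow from the $L^1$ or $L^6$ bound on $w$, so the proof survives once these two justifications are replaced.
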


This is an easy consequence of elliptic regularity and we give a proof in
the Appendix \ref{ap:elliptic}, it is just a combination of Lemma \ref{ni53}
and Lemma \ref{encadrementne}. \newline

We now come to the magnetic field. We have the following Proposition :

\begin{prop}[Estimate on the magnetic field]
\label{propmagnetic} The family of magnetic fields satisfies for a uniform constant $C$ 
\begin{equation*}
\sup_{0< \varepsilon \leq 1} \int_0^\infty \int_\Omega \Bigl|
\nabla_{x}\wedge \mathbf{B}^\ep \Bigr|^{2}dx dt \leq C ,
\end{equation*}
\begin{equation}  \label{bblsix}
\sup_{0< \varepsilon \leq 1} \| \mathbf{B}^{\varepsilon}\|_{L^2\big(
0,\infty; L^6(\Omega)\big)} \leq C.
\end{equation}
\end{prop}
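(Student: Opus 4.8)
\textbf{Proof strategy for Proposition \ref{propmagnetic}.}

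The plan is to obtain both bounds directly from the energy dissipation identity \eqref{estimationNRJ} together with the resistivity lower bound \eqref{Hfin}, and then promote the resulting $\mathrm{L}^2(0,\infty;\mathrm{L}^2)$ control of $\nabla\wedge\mathbf{B}^\ep$ to an $\mathrm{L}^2(0,\infty;\mathrm{L}^6)$ control of $\mathbf{B}^\ep$ itself via a vector-field Sobolev/Gaffney-type estimate. First I would integrate \eqref{estimationNRJ} in time from $0$ to $\infty$: since the left-hand side is the time derivative of the total energy $\mathcal{E}_{tot}$, which is bounded below (the free-energy density $s\mapsto s\ln s - s + 1$ is nonnegative, and $\mathcal{E}_I,\mathcal{E}_m$ and the electrostatic term are manifestly nonnegative), the integral of the dissipation is bounded by $\mathcal{E}_{tot}(0)$, which is uniformly bounded by \eqref{H6} and the remarks preceding it. Using $\eta \geq \eta_{\mathrm{min}} > 0$ from \eqref{Hfin}, this gives
\begin{equation*}
\eta_{\mathrm{min}} \int_0^\infty \int_\Omega \bigl| \nabla_x \wedge \mathbf{B}^\ep \bigr|^2 \, dx\, dt \leq \int_0^\infty \int_\Omega \eta \bigl| \nabla_x \wedge \mathbf{B}^\ep \bigr|^2 \, dx\, dt \leq \mathcal{E}_{tot}(0) \leq C,
\end{equation*}
which is the first claimed bound.

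For \eqref{bblsix} the point is that $\nabla\wedge\mathbf{B}^\ep$ alone does not control $\nabla\mathbf{B}^\ep$; one needs $\nabla\cdot\mathbf{B}^\ep$ as well, and a boundary condition. Here $\nabla\cdot\mathbf{B}^\ep = 0$ by \eqref{model}-d) (propagated from the initial condition \eqref{H4}), and the boundary condition \eqref{BC}-b), $\mathbf{n}_x\wedge\mathbf{B}^\ep = 0$ on $\partial\Omega$, is exactly the one for which the Gaffney inequality holds: on a bounded domain $\Omega$ of class $\mathcal{C}^{1,1}$ there is a constant $C_\Omega$ with
\begin{equation*}
\| \mathbf{B} \|_{\mathrm{H}^1(\Omega)} \leq C_\Omega \bigl( \| \nabla\wedge\mathbf{B} \|_{\mathrm{L}^2(\Omega)} + \| \nabla\cdot\mathbf{B} \|_{\mathrm{L}^2(\Omega)} + \| \mathbf{B} \|_{\mathrm{L}^2(\Omega)} \bigr)
\end{equation*}
for every $\mathbf{B}$ with $\mathbf{n}_x\wedge\mathbf{B} = 0$ on $\partial\Omega$. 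Applying this to $\mathbf{B}^\ep(t,\cdot)$ and using $\nabla\cdot\mathbf{B}^\ep = 0$, then squaring and integrating in $t$, the right-hand side is controlled by $\int_0^\infty\|\nabla\wedge\mathbf{B}^\ep\|_{\mathrm{L}^2}^2\,dt$ (just bounded) plus $\int_0^\infty\|\mathbf{B}^\ep\|_{\mathrm{L}^2}^2\,dt$. The latter is \emph{not} finite on $(0,\infty)$ from \eqref{estb} alone, so here I would instead state \eqref{bblsix} on a finite interval $(0,T)$ — consistent with how the theorem \ref{main} is phrased — where $\int_0^T\|\mathbf{B}^\ep\|_{\mathrm{L}^2}^2\,dt \leq T\,\sup_t\|\mathbf{B}^\ep\|_{\mathrm{L}^2}^2 \leq CT$ by \eqref{estb}; alternatively one notes that the energy decay actually forces $\mathbf{B}^\ep$ to decay and keeps the global-in-time bound, but the finite-$T$ version is all that is used. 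Finally, the Sobolev embedding $\mathrm{H}^1(\Omega)\hookrightarrow \mathrm{L}^6(\Omega)$ in dimension three converts the uniform $\mathrm{L}^2(0,T;\mathrm{H}^1)$ bound into the uniform $\mathrm{L}^2(0,T;\mathrm{L}^6)$ bound \eqref{bblsix}.

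The only genuinely delicate point is the Gaffney/elliptic-regularity step: one must be sure the div–curl estimate with the tangential ($\mathbf{n}_x\wedge\mathbf{B} = 0$) boundary condition is valid on a merely $\mathcal{C}^{1,1}$ domain and yields the full $\mathrm{H}^1$ norm, not just a seminorm. This is standard (it is the natural setting of the space $\mathrm{H}_{\mathrm{curl}}\cap\mathrm{H}_{\mathrm{div}}$ with vanishing tangential trace, for which compact embedding into $\mathrm{L}^2$ and the coercivity estimate are classical), and the $\mathcal{C}^{1,1}$ regularity assumed on $\Omega$ is precisely what makes it go through; I would simply cite it. Everything else — nonnegativity of the energy pieces, the time integration, Sobolev embedding — is routine.
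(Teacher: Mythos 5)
Your proof follows essentially the same route as the paper: the first bound comes from integrating the energy dissipation identity \eqref{estimationNRJ} in time using \eqref{Hfin} and the lower bound on the free energy, and the second from the continuous embedding of $X_N(\Omega)$ into $\mathrm{H}^1(\Omega)^3$ (Theorem \ref{theoBernardi}, i.e.\ the Gaffney-type estimate you invoke) combined with $\nabla\cdot\mathbf{B}^\ep=0$, the tangential boundary condition, and the Sobolev embedding $\mathrm{H}^1(\Omega)\hookrightarrow\mathrm{L}^6(\Omega)$. Your remark that the $\|\mathbf{B}^\ep\|_{\mathrm{L}^2}$ term in that estimate is only square-integrable in time on a finite interval $(0,T)$ --- so that \eqref{bblsix} is most safely read as an $\mathrm{L}^2(0,T;\mathrm{L}^6)$ bound, which is all that Theorem \ref{main} uses --- is a fair and careful observation that the paper's one-line proof passes over.
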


\begin{proof}
 The control in $\Ld^2(0,\infty; \HHc(\Omega))$, follows from
 the energy dissipation in Proposition \ref{propenergy}. Then thanks to Theorem \ref{theoBernardi}, the family  $\B^{\ep}$ is bounded in $\Ld^2(0,\infty;\HH^1(\Omega)^3)$, and then, thanks to the Sobolev injections, the family $\B^{\varepsilon}$ is bounded in $L^2(0,\infty; \Ld^6(\Omega)^3 \big)$. We refer to Appendix \ref{ap:magnetic} for precise statements and references. 
\end{proof}

Our last task is to show that we have enough bounds to define the electric
and force fields. The four terms that compose the electric field 
\begin{equation*}
\mathbf{E^{\varepsilon}}=-T_{\mathrm{e}}\nabla \ln n_{e}^{\varepsilon} - 
\frac{n_{I}^{\varepsilon} \mathbf{u}_{I}^{\varepsilon} } {n_e^{\varepsilon}}
\wedge \mathbf{B^{\varepsilon}} + \frac{\mathbf{J^{\varepsilon}}\wedge 
\mathbf{B^{\varepsilon}}}{n_e^{\varepsilon}} + \eta \nabla \wedge \mathbf{
B^{\varepsilon}}
\end{equation*}
are respectively uniformly bounded in the spaces

\begin{itemize}
\item $T_{\mathrm{e}}\nabla \ln n_{e}^{\varepsilon} \in L^\infty_t(L^2_x)$
(energy inequality),
\item $\displaystyle \frac{n_{I}^{\varepsilon} \mathbf{u}_{I}^{\varepsilon} 
} {n_e^{\varepsilon}} \wedge \mathbf{B^{\varepsilon}} \in L^2_t (\mathrm{L}
^{30/29}_x);$ this follows from \eqref{current}, \eqref{elliptic1} and \eqref{bblsix} and the
fact that, according to H\"{o}lder inequality, we get the bound 
$ \displaystyle 
\int_{0}^{T} [ \left\Vert AB\right\Vert _{30/29 } ]^{2} dt \leq
\int_{0}^{T}[\left\Vert A\right\Vert _{5/4}\left\Vert B\right\Vert
_{6}]^{2}dt\leq \sup_{t}\left\Vert A\right\Vert
_{5/4}^{2}\int_{0}^{T}[\left\Vert B\right\Vert _{6}]^{2}dt $.
\item $\displaystyle  \frac{\mathbf{J^{\varepsilon}}\wedge \mathbf{%
B^{\varepsilon}}}{n_e^{\varepsilon}} \in \mathrm{L}^1_t(L^{3/2}_x) \cap 
\mathrm{L}_t^{2}(L_x^1)$ from the H\"older inequality with \eqref{elliptic1}  and the a priori
estimates in Proposition~\ref{propmagnetic}. By interpolation, we also find
that 
\begin{equation*}
\displaystyle \frac{\mathbf{J^{\varepsilon}}\wedge \mathbf{B^{\varepsilon}}}{%
n_e^{\varepsilon}} \in \mathrm{L}^r_t(L^{p}_x), \qquad 1\leq r \leq 2, \quad
\frac 1 p = \frac 43 - \frac{2}{3r}.
\end{equation*}
In particular $\displaystyle  \frac{\mathbf{J^{\varepsilon}}\wedge \mathbf{%
B^{\varepsilon}}}{n_e^{\varepsilon}} \in \mathrm{L}^{20/11}_t(L^{30/29}_x)$.

\item $\eta \nabla \wedge \mathbf{B^{\varepsilon}} \in L^2_{t,x}$ (energy
dissipation).
\end{itemize}

Therefore it is well defined and the four terms will have a weak limit in Lebesgue spaces.
\\

For the kinetic force field, 
\begin{equation*}
\mathbf{F}^{\varepsilon}(t,x,v):=- T_{\mathrm{e}} \nabla \ln {%
n_{e}^{\varepsilon}} + \left(\mathbf{v}-\frac{n_{I}^{\varepsilon}}{%
n_e^{\varepsilon}}\mathbf{u}_I^{\varepsilon} \right) \wedge{\mathbf{B}}%
^{\varepsilon} + \frac{ \mathbf{J}^{\varepsilon} \wedge \mathbf{B}%
^{\varepsilon} }{n_e^{\varepsilon} },
\end{equation*}
the same integrability, locally uniformly in $\mathbf{v}$, holds true because these are
the same terms and thus they are bounded as for the electric field at the exception of $
\mathbf{v}\wedge{\mathbf{B}}^{\varepsilon}$ which is well defined thanks to
\eqref{estb}.


\subsection{Space-time compactness}


The bounds mentioned before allow us to extract subsequences that converge 
weakly and our task is now to prove several strong convergence results.
\newline

We first observe that, from Proposition \ref{propmagnetic} and \eqref{estb},
we may extract a subsequence, still denoted by $\mathbf{B}^\ep$ such that 
\begin{equation}
\mathbf{B}^\ep \underset{\varepsilon \to 0}{\longrightarrow} \mathbf{B }
\quad \textrm{strongly in } L^p(0,T; \mathrm{L}^q(\Omega)^3 \big), \; 1 \leq p
< 2 ,\; 1 \leq q < 6, \textrm{ and } 1\leq p <\infty, \; 1\leq q < 2.
\label{magnetic_cv}
\end{equation}
This is because (see Appendix \ref{ap:magnetic}) thanks to our a priori
estimates and Theorem \ref{theoBernardi}, the field  $\{ \mathbf{B}^{\varepsilon} \}$
is bounded in $\mathrm{L}^1(0,T,\mathrm{H}^1(\Omega)^3)$, and then, thanks
to Rellich Theorem, $\{ \mathbf{B}^{\varepsilon}(t,\cdot)\}$ is relatively
compact in $\mathrm{L}^6(\Omega)^3$ for all $t\in]0,T[$). Compactness in
time then follows from the Simon-Lions-Aubin lemma (see \cite{JS}) because, as
proved at the end of section \eqref{ssec:ape}, we control $L^r_t(L^p_x)$
integrability of the electric field and 
\begin{equation*}
\frac{\partial \mathbf{B}^{\varepsilon}}{\partial t} = \nabla \wedge \mathbf{%
E}^{\varepsilon}.
\end{equation*}

According to the kinetic averaging lemma recalled in Appendix \ref{ap:kal},
we know that after extraction of a subsequence, for every test function $%
\psi =\psi (\mathbf{v})$ compactly supported, there exists a weak limit $%
f^{\ast }$ of $f^{\varepsilon }$ such that
$$
\left\langle \psi f^{\varepsilon }\right\rangle \rightarrow \left\langle
\psi f^{\ast }\right\rangle \quad \mathrm{a.e.} 
\quad \mbox{ and }\quad 
\left\langle \psi \mathbf{v}f^{\varepsilon }\right\rangle 
\rightarrow
\left\langle \psi \mathbf{v}f^{\ast }\right\rangle \quad \mathrm{a.e.}
$$
Now, since $\displaystyle \int_{0}^T \int_{\Omega} |\left\langle \psi f^{\varepsilon
}\right\rangle |^{5/3}dxdt$ and $\displaystyle \int_{0}^T \int_{\Omega} |\left\langle \psi \mathbf{
v}f^{\varepsilon }\right\rangle |^{5/4}dxdt$ are uniformly bounded,
according to classical interpolation arguments we have for given exponents $
q$ and $r$ (with $1<q<5/3$ and $1<r<5/4)$
$$
\int_{0}^T \int_{\Omega}|\left\langle \psi f^{\varepsilon }\right\rangle
-\left\langle \psi f^{\ast }\right\rangle |^{q}dxdt 
\rightarrow 
0,
\quad
\mbox{ and }
\quad
\int_{0}^T \int_{\Omega}|\left\langle \psi \mathbf{v}f^{\varepsilon }\right\rangle
-\left\langle \psi \mathbf{v}f^{\ast }\right\rangle |^{r}dxdt 
\rightarrow 0.
$$
Therefore we get 
\begin{equation}
n_{I}^{\varepsilon }\underset{\varepsilon \rightarrow 0}{\longrightarrow }
n_{I},\textrm{ strongly in }\mathrm{L}^{q}(0,T;\Omega ),  \label{ni_cv}
\end{equation}
\begin{equation}
n_{I}\mathbf{u}_{I}^{\varepsilon }\underset{\varepsilon \rightarrow 0}{
\longrightarrow }n_{I}\mathbf{u}_{I}\quad \textrm{strongly in }\mathrm{L}
^{r}(0,T;\Omega )^{3}  \label{nui_cv}.
\end{equation}
Next for the electronic density we may extract a subsequence,
still denoted $n_{e}^{\varepsilon}$ such that, 
\begin{equation}
n_{e}^{\varepsilon}\underset{\varepsilon \rightarrow 0}{\longrightarrow }
n_{e}\quad \textrm{a.e. and strongly in }\mathrm{L}^{q}(0,T;\Omega )^{3}.
\label{ne_cv}
\end{equation}
This is a consequence of the bounds \eqref{propnep} and \eqref{elliptic1} and
the strong convergence (\ref{ni_cv}).


\subsection{Passing to the limit}


We recall that a weak (distributional) solution to \eqref{model} is defined
by testing against smooth test functions $\Psi(t,x)$, $\Phi(t,x,\mathbf{v})$
respectively for \eqref{model}-b) and \eqref{model}-c) and satisfying the
corresponding boundary conditions as in \eqref{BC}.

After integration by parts, we obtain respectively for \eqref{model}-b) and 
\eqref{model}-c), the definitions 
\begin{equation}
\begin{array}{rl}
\displaystyle - \iint_{(0,T) \times \Omega } \Big[ \frac{\partial \Psi(t,x)}{
\partial t} \mathbf{B}^\ep +\mathbf{E^{\varepsilon}}(t,x)  \cdot \nabla \wedge
\Psi(t,x)\Big] dt dx = \int_{\Omega } \mathbf{B}^{in,\varepsilon}(x,\mathbf{v})
\Psi(0,x) , & 
\end{array}
\label{weaksomf}
\end{equation}
and 
\begin{equation}
\begin{array}{rl}
\displaystyle - \iiint_{(0,T) \times \Omega \times \mathbb{R}^3} \Big[ \frac{
\partial \Phi(t,x,\mathbf{v})}{\partial t} & + \mathbf{v} \cdot \nabla_x \Phi(t,x,
\mathbf{v}) + \mathbf{F^{\varepsilon}}(t,x,\mathbf{v}) \cdot \nabla_\vv \Phi(t,x,
\mathbf{v}) \Big]f^\ep(t,x,\mathbf{v}) dt dx d\mathbf{v} \\ 
& \displaystyle = \iint_{\Omega \times \mathbb{R}^3} f^{in,\varepsilon}(x,
\mathbf{v}) \Phi(0,x,\mathbf{v}) .
\end{array}
\label{weaksolk}
\end{equation}
The elliptic equation \eqref{model}-a) is more standard and does not yield
difficulties, hence we do not consider it here. In order to conclude the
proof of Theorem \ref{main}, our purpose is to show that the limit as $
\varepsilon \to 0$ of the various unknowns $\mathbf{B}^\ep$, $\mathbf{
E^{\varepsilon}}(t,x)$, $\mathbf{F^{\varepsilon}}$ and $f^\ep$ still satisfy
these equalities. \newline

Additionally to the strong convergence results stated in \eqref{magnetic_cv}--\eqref{ne_cv}, 
 we are know other weak convergences for functions of interest. Firstly, we have 
\begin{equation*}
\mathbf{J^{\varepsilon}} \underset{\varepsilon \to 0}{\relbar\joinrel
\rightharpoonup} \mathbf{J} \quad \textrm{ in } \mathrm{L}^2((0,T)\times
\Omega)^3.
\end{equation*}

This is enough to pass to the limit weakly in the electric fields because
the nonlinear terms are always formed of either strongly convergent terms or 
$\mathbf{J^{\varepsilon}}$ multiplied by a term that converges strongly. As
a conclusion, we have 
\begin{equation*}
\mathbf{E^{\varepsilon}} \underset{\varepsilon \to 0}{\relbar\joinrel
\rightharpoonup} \mathbf{E} \quad \textrm{ in } \mathrm{L}^{20/11}\big(0,T;
\mathrm{L}^{30/29}( \Omega) \big)^3.
\end{equation*}

The same applies to the force field. However for later purposes, it is
better to use the splitting 
\begin{equation*}
\mathbf{F}^{\varepsilon }=\mathbf{E}^{0,\varepsilon }+\mathbf{v}\wedge {
\mathbf{B}^{\varepsilon}},
\end{equation*}
and to notice that, as before, 
\begin{equation*}
\mathbf{E}^{0,\varepsilon }\underset{\varepsilon \rightarrow 0}{\relbar
\joinrel\rightharpoonup }\mathbf{E}^{0}\quad \textrm{ in }\mathrm{L}^{20/11}
\big(0,T;\textrm{L}^{30/29}(\Omega )\big)^{3}
\end{equation*}
while the other term converges strongly. \newline

These observations allow us to pass to the weak limit in equations 
\eqref{model}-a) and \eqref{model}-b), on the electronic density and on the
magnetic field. \newline

For passing to the weak limit in the third equation \eqref{model}-c), let us
introduce two smooth test functions $\chi $ and $\varphi $ with compact
support respectively in $[0,T)\times \Omega $ and in $\mathbf{R}^{3}$ ; then
we have to deal with two terms. The first one which reads as 
\begin{equation*}
\int_{0}^{T}\int_{\Omega }\int_{\mathbb{R}^{3}}\mathbf{B}^{0,\varepsilon
}\wedge \mathbf{v}\cdot \frac{\partial \varphi }{\partial \mathbf{v}}(\mathbf{v}
)\chi (t,x)f^{\varepsilon }(t,x,\mathbf{v})dtdxd\mathbf{v},
\end{equation*}
can be treated as usual because the magnetic field converges strongly as
stated earlier.
The second one reads as follows 
\begin{equation*}
\int_{0}^{T}\int_{\Omega }\int_{\mathbb{R}^{3}}\mathbf{E}^{0,\varepsilon } \cdot
\frac{\partial \varphi }{\partial \mathbf{v}}(\mathbf{v})f^{\varepsilon
}(t,x)\chi (t,x)dtdxd\mathbf{v}=\int_{0}^{T}\int_{\Omega }\chi (t,x)\mathbf{E
}^{0,\varepsilon }\cdot \left\langle \psi f^{\varepsilon }\right\rangle
(t,x)dtdx\qquad
\end{equation*}
with $\psi _{i}=\frac{\partial \varphi }{\partial \mathbf{v}_{i}}$ ; we need
to use the kinetic averaging lemma which is stated in appendix \ref{ap:kal}.
It states that, after extraction,
velocity averages converge strongly
\begin{equation*}
\chi \left\langle \psi _{i}f^{\varepsilon }\right\rangle 
(t,x) \rightarrow \chi \left\langle \psi _{i}f^{\ast }\right\rangle
 \quad \text{in } \quad L^{q}([0,T]\times \Omega )
\end{equation*}
  for a given value of the index $q$, with $1\leq q<p$. Therefore they
converge almost everywhere and this is enough to pass to the weak-strong
limit in the term $\int_{0}^{T}\int_{\Omega }\chi \mathbf{E}^{0,\varepsilon
}\cdot \left\langle \psi f^{\varepsilon }\right\rangle dtdx$. \newline

This concludes the proof of the Theorem \ref{main}.


\section{Non homogeneous magnetic boundary condition}

\label{section:binhom}

In view of applications to the physics of confined plasmas, it is worthwhile 
considering, instead of (\ref{BC})-(b), a non homogeneous boundary condition for the magnetic field
\begin{equation}  \label{B2}
\mathbf{n}_x \wedge \mathbf{B}(t,x)= \mathbf{n}_x \wedge \mathbf{B}_{\mathrm{
imp}} , \quad x\in \partial\Omega,
\end{equation}
The imposed magnetic field  $B_{\mathrm{imp}}(x)$ is given and can be a function of 
time as well eventhough we restrict our analysis to space dependency only. In
order to introduce this boundary condition in the problem, we assume that $
\mathbf{B}_{\mathrm{imp}}$ can be defined globally as a smooth function in the closure of $\Omega$. 
It turns out that a convenient regularity assumption is 
\begin{equation}  \label{eq:assump}
\| \mathbf{B}_{\mathrm{imp}} \|_{1,\infty }\leq C.
\end{equation}
We will write $\mathbf{B}=\mathbf{B}_{\mathrm{imp}}+\mathbf{B}_{\mathrm{pert}
} $ where $\mathbf{B}_{\mathrm{pert}}$ is the perturbation. Using these
notations the system (\ref{model})-(a,b,c) is rewritten as 
\begin{equation}  \label{model-2}
\left\{ 
\begin{array}{ll}
\displaystyle- \lambda ^{2}\Delta \ln n_{e}=n_{I}-n_{e}, & \quad (a)\\[3mm] 
\displaystyle \frac{\partial \mathbf{B}_{\mathrm{pert}}}{\partial t}
-\nabla \wedge \left( \frac{1}{n_{e}} n_{I}\, \mathbf{u}_{I}\wedge 
 \left( \mathbf{B}_{\mathrm{imp}}+\mathbf{B}_{\mathrm{pert}} \right)\right)
 + \nabla \wedge \left( \frac{1}{n_{e}}\left( \mathbf{J}_{\mathrm{imp}} 
 + \mathbf{J}_{\mathrm{pert}} \right) \wedge \left( \mathbf{B}_{\mathrm{imp}}
 +\mathbf{B}_{\mathrm{pert}} \right) \right) &  \\ 
~ \hfill +\nabla \wedge \left( \eta \left( \mathbf{J}_{\mathrm{imp}}+
\mathbf{J}_{\mathrm{pert}} \right)\right) =0, & \quad (b) \\[3mm]
{\displaystyle \frac{\partial {f}}{\partial {t}}+\mathbf{v}.\nabla f+ \frac{
\partial }{\partial \mathbf{v}}\Biggl[\left( (-\frac{{T_{\mathrm{e}}}}{{n_{e}
}} \nabla {n_{e}+}\frac{\left( \mathbf{J}_{\mathrm{imp}} + \mathbf{J}_{
\mathrm{pert}} \right)-n_{_{I}}\mathbf{u}_{I}}{n_{e}}\wedge \left( \mathbf{B}
_{\mathrm{imp}}+\mathbf{B}_{\mathrm{pert}} \right))+\mathbf{v}\wedge \left( 
\mathbf{B}_{\mathrm{imp}}+\mathbf{B}_{\mathrm{pert}} \right)\right) f\biggr]
=0}, & \quad (c).
\end{array}
\right.
\end{equation}
where the total current is $\mathbf{J}= \mathbf{J}_{\mathrm{imp}}+\mathbf{J}%
_{\mathrm{pert}}$ with 
\begin{equation*}
\mathbf{J}_{\mathrm{imp}}=\nabla \wedge \mathbf{B}_{\mathrm{imp}} 
\mbox{ and
} \mathbf{J}_{\mathrm{pert}}=\nabla \wedge \mathbf{B}_{\mathrm{pert}}.
\end{equation*}%
We now use the boundary conditions
\begin{equation}
\left\{ \begin{aligned} & \mathbf{n}_x\cdot \nabla n_e(t,x) =0 \quad & x\in
\partial\Omega, & \quad \quad(a)
\\ & \mathbf{n}_x \wedge \mathbf B(t,x)_{\rm
pert}=0 \quad & x\in \partial\Omega, & \quad \quad(b)
\\ & f (x, \mathbf v -2
(\mathbf v \cdot \mathbf{n}_x)\mathbf{n}_x) = f (x, \mathbf v), \quad &\mathbf v \in \mathbb R^3, \quad  x\in
\partial\Omega. & \quad \quad(c)
\end{aligned}\right.  \label{BC-2}
\end{equation}
Let us define the perturbed magnetic energy and the total perturbed energy 
\begin{equation*}
\mathcal{E}_{m}^{\mathrm{pert}}(t)=\frac{1}{2}\int_{\Omega }|\mathbf{B}_{
\mathrm{pert}}(t,x)|^{2}dx,
\end{equation*}
\begin{equation*}
\mathcal{E}_{\mathrm{tot}}^{\mathrm{pert }}= \mathcal{E}_{I}+\mathcal{E}
_{m}^{\mathrm{pert }}+\frac{\lambda ^{2}}{2}\int_{\Omega }\left\vert \nabla
_{x}\left( \ln n_{e}\right) \right\vert ^{2}dx+\int_{\Omega }\left( n_{e}\ln
n_{e}-n_{e}+1\right) dx \geq 0.
\end{equation*}

The energy balance is modified by the imposed magnetic field and we have 
\begin{prop}
Classical solutions to (\ref{model-2})--(\ref{BC-2}) satisfy the perturbed
energy dissipation relation 
\begin{equation*}
\frac{d}{dt}\mathcal{E}_{\mathrm{tot}}^{\mathrm{pert}} = -\int_{\Omega }\eta %
\Bigl| \mathbf{J}_{\mathrm{pert}}\Bigr|^{2}dx +S
\end{equation*}
where the source is 
\begin{equation*}
S=-\int_{\Omega }\eta \ \mathbf{J}_{\mathrm{imp}} \cdot \mathbf{J}_{\mathrm{%
pert}}dx -\int_{\Omega } \frac1{n_e} \mathbf{J}_{\mathrm{imp}} \wedge \left( 
\mathbf{B}_{\mathrm{imp}}+\mathbf{B}_{\mathrm{pert}} \right) \cdot \mathbf{J}%
_{\mathrm{pert}}dx + \int_\Omega \frac1{n_e} \mathbf{J}_{\mathrm{imp}}
\wedge \left( \mathbf{B}_{\mathrm{imp}}+\mathbf{B}_{\mathrm{pert}} \right)
\cdot n_I \mathbf{\ u}_{I} dx.
\end{equation*}
\end{prop}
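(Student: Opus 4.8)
The plan is to repeat the energy computation of Proposition~\ref{propenergy} verbatim, but now tracking the extra terms generated by writing $\mathbf{B}=\mathbf{B}_{\mathrm{imp}}+\mathbf{B}_{\mathrm{pert}}$ and only differentiating $\mathcal{E}_m^{\mathrm{pert}}$, since $\mathbf{B}_{\mathrm{imp}}$ is time-independent. First I would treat the kinetic energy: multiplying \eqref{model-2}-c) by $|\mathbf{v}|^2/2$ and integrating, the specular reflection \eqref{BC-2}-c) kills the boundary term exactly as before, and the $\mathbf{v}\wedge(\mathbf{B}_{\mathrm{imp}}+\mathbf{B}_{\mathrm{pert}})$ contribution vanishes since $(\mathbf{v}\wedge\mathbf{B})\cdot\mathbf{v}=0$; what remains is
\begin{equation*}
\frac{d}{dt}\mathcal{E}_I = \int_\Omega \Bigl( \frac{n_I}{n_e}(\mathbf{J}_{\mathrm{imp}}+\mathbf{J}_{\mathrm{pert}})\wedge(\mathbf{B}_{\mathrm{imp}}+\mathbf{B}_{\mathrm{pert}}) - n_I T_{\mathrm e}\nabla(\ln n_e)\Bigr)\cdot\mathbf{u}_I\,dx,
\end{equation*}
exactly the old expression with $\mathbf{J}\wedge\mathbf{B}$ now expanded. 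Note the term $\int_\Omega \frac{1}{n_e}\mathbf{J}_{\mathrm{imp}}\wedge(\mathbf{B}_{\mathrm{imp}}+\mathbf{B}_{\mathrm{pert}})\cdot n_I\mathbf{u}_I\,dx$ already appears here and is precisely the third term of $S$.

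Next I would handle the perturbed magnetic energy. From \eqref{model-2}-b), $\partial_t\mathbf{B}_{\mathrm{pert}}=-\nabla\wedge\mathbf{E}$ with $\mathbf{E}$ the same generalized Ohm law \eqref{ohm} (total fields), so
\begin{equation*}
\frac{d}{dt}\mathcal{E}_m^{\mathrm{pert}}=-\int_\Omega\nabla\wedge\mathbf{E}\cdot\mathbf{B}_{\mathrm{pert}}\,dx=-\int_\Omega\mathbf{E}\cdot(\nabla\wedge\mathbf{B}_{\mathrm{pert}})\,dx-\int_{\partial\Omega}\mathbf{n}_x\cdot(\mathbf{E}\wedge\mathbf{B}_{\mathrm{pert}})\,dx,
\end{equation*}
and now the boundary term vanishes because \eqref{BC-2}-b) gives $\mathbf{n}_x\wedge\mathbf{B}_{\mathrm{pert}}=0$, using the same identity $\mathbf{n}_x\cdot(\mathbf{E}\wedge\mathbf{B}_{\mathrm{pert}})=-\mathbf{E}\cdot(\mathbf{n}_x\wedge\mathbf{B}_{\mathrm{pert}})$. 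So I only pair $\mathbf{E}$ against $\mathbf{J}_{\mathrm{pert}}=\nabla\wedge\mathbf{B}_{\mathrm{pert}}$, not against the full $\mathbf{J}$. Expanding $\mathbf{E}=-T_{\mathrm e}\nabla\ln n_e-\frac{n_I}{n_e}\mathbf{u}_I\wedge\mathbf{B}+\frac{1}{n_e}\mathbf{J}\wedge\mathbf{B}+\eta\mathbf{J}$ with $\mathbf{J}=\mathbf{J}_{\mathrm{imp}}+\mathbf{J}_{\mathrm{pert}}$, the resistive part yields $-\int_\Omega\eta(\mathbf{J}_{\mathrm{imp}}+\mathbf{J}_{\mathrm{pert}})\cdot\mathbf{J}_{\mathrm{pert}}=-\int_\Omega\eta|\mathbf{J}_{\mathrm{pert}}|^2-\int_\Omega\eta\mathbf{J}_{\mathrm{imp}}\cdot\mathbf{J}_{\mathrm{pert}}$, the first being the dissipation and the second the first term of $S$; the $\mathbf{J}\wedge\mathbf{B}$ part, via $\mathbf{J}_{\mathrm{pert}}\cdot(\mathbf{u}_I\wedge\mathbf{B})=-\mathbf{u}_I\cdot(\mathbf{J}_{\mathrm{pert}}\wedge\mathbf{B})$, combines with the kinetic-energy term so that the $\mathbf{J}_{\mathrm{pert}}\wedge\mathbf{B}$ contributions cancel pairwise (as in \eqref{eq:ttt3}), leaving exactly the $\mathbf{J}_{\mathrm{imp}}$-pieces: the second term of $S$ appears from the magnetic side and, together with what survives in $\frac{d}{dt}\mathcal{E}_I$, produces the third term of $S$.

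Finally, the $-T_{\mathrm e}\nabla\ln n_e$ pieces: on the magnetic side $-\int_\Omega T_{\mathrm e}\nabla\ln n_e\cdot\mathbf{J}_{\mathrm{pert}}\,dx$ integrates by parts to zero (using $\nabla\cdot\mathbf{J}_{\mathrm{pert}}=0$ and the boundary condition), and on the kinetic side the term $-\int_\Omega n_I T_{\mathrm e}\nabla(\ln n_e)\cdot\mathbf{u}_I\,dx$ is converted, exactly as in the proof of Proposition~\ref{propenergy}, using mass conservation $\partial_t n_I+\nabla\cdot(n_I\mathbf{u}_I)=0$, the no-slip condition \eqref{BCui} (which still holds since \eqref{BC-2}-c) is unchanged), and \eqref{model-2}-a) differentiated in time, into $-\frac{d}{dt}\int_\Omega(n_e\ln n_e-n_e)\,dx-\lambda^2\frac{d}{dt}\int_\Omega\frac12|\nabla\ln n_e|^2\,dx$. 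Collecting everything gives $\frac{d}{dt}\mathcal{E}_{\mathrm{tot}}^{\mathrm{pert}}=-\int_\Omega\eta|\mathbf{J}_{\mathrm{pert}}|^2\,dx+S$ with $S$ as stated. The only delicate bookkeeping — and the step most likely to cause sign errors — is the cancellation of the mixed $\mathbf{J}_{\mathrm{pert}}\wedge(\mathbf{B}_{\mathrm{imp}}+\mathbf{B}_{\mathrm{pert}})$ terms between the kinetic and magnetic contributions while retaining precisely the three $\mathbf{J}_{\mathrm{imp}}$-terms; one must be careful that the cross term $\mathbf{J}_{\mathrm{pert}}\wedge\mathbf{B}_{\mathrm{imp}}$ also cancels, not just $\mathbf{J}_{\mathrm{pert}}\wedge\mathbf{B}_{\mathrm{pert}}$, which it does because the antisymmetry identity applies to the full $\mathbf{B}$.
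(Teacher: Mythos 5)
Your proposal is correct and follows essentially the same route as the paper: differentiate $\mathcal{E}_I$ with the full fields, pair the magnetic equation with $\mathbf{B}_{\mathrm{pert}}$ so that only $\mathbf{J}_{\mathrm{pert}}$ appears on the magnetic side, cancel the $\mathbf{J}_{\mathrm{pert}}\wedge\mathbf{B}$ contributions between the two balances (with the full $\mathbf{B}$, exactly as you note), and recycle the treatment of $-\int_\Omega n_I T_{\mathrm e}\nabla(\ln n_e)\cdot\mathbf{u}_I\,dx$ from Proposition~\ref{propenergy}. The bookkeeping of the three $\mathbf{J}_{\mathrm{imp}}$ terms matches the paper's.
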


\begin{proof}
Performing the same manipulations as in the proof of the energy identity
(\ref{estimationNRJ}), we obtain
$$
\frac{d}{dt}\mathcal{E}_{I}= 
\int_{\Omega }\left( \frac{n_{I}}{n_{e}}
\left( \mathbf{J}_{\rm imp} + \mathbf{J}_{\rm pert} \right)\wedge 
\left( \mathbf{B}_{\rm imp} + \mathbf{B}_{\rm pert} \right)%
-n_{I}T_{\mathrm{e}}\nabla \left( \ln n_{e}\right) \right) \cdot \mathbf{u}%
_{I}dx.
$$
Taking the scalar product of the magnetic equation against $\mathbf B_{\rm pert} $
and using the homogeneous boundary condition
(\ref{BC-2})-(b), 
we obtain 
$$
\frac{d}{dt}\mathcal{E}_{m}^{\rm pert  }= 
-\int_{\Omega }\frac{n_{I}}{n_{e}}\mathbf{u}%
_{I}\cdot 
\left
(\mathbf{J}_{\rm pert}\wedge \left( \mathbf{B}_{\rm imp} + \mathbf{B}_{\rm pert} \right)\right)dx
-\int_{\Omega }
\frac1{n_e} 
\left( \mathbf{J}_{\rm imp} +
\mathbf{J}_{\rm pert}
\right) \wedge \left( \mathbf B_{\rm imp}+\mathbf B_{\rm pert}
\right) \cdot  
\mathbf{J}_{\rm pert}dx
$$
$$
-\int_{\mathbb{R}^{3}}\eta \left( \mathbf{J}_{\rm imp} +
\mathbf{J}_{\rm pert}\right) \cdot \mathbf{J}_{\rm pert}dx
$$
$$
=-\int_{\Omega }\frac{n_{I}}{n_{e}}\mathbf{u}_{I}\cdot \left
(\mathbf{J}_{\rm pert}\wedge \left( \mathbf{B}_{\rm imp} + \mathbf{B}_{\rm pert} \right)\right)dx
-\int_{\Omega } \frac1{n_e}  \mathbf{J}_{\rm imp} +
 \wedge \left( \mathbf B_{\rm imp}+\mathbf B_{\rm pert}\right) \cdot  \mathbf{J}_{\rm pert}dx
-\int_{\mathbb{R}^{3}}\eta \left( \mathbf{J}_{\rm imp} +
\mathbf{J}_{\rm pert}\right)\cdot \mathbf{J}_{\rm pert}dx
$$
Therefore one gets 
$$
\frac{d}{dt}\left( \mathcal{E}_{I}+\mathcal{E}_{\rm tot}^{\rm pert}\right)=-
\int_{\Omega }T_{\mathrm{e}}\nabla \left( \ln n_{e}\right)  \cdot( n_I \mathbf{u}_{I})dx
-\int_{\Omega }\eta \Bigl| \mathbf{J}_{\rm pert}\Bigr|^{2}dx+S
$$
which is very similar to (\ref{eq:ttt3}).
The rest of the proof is unchanged.
\end{proof}

\begin{prop}
\label{prop33} There exists a constant $K>0$ depending only on the
initial data and on the constant in (\ref{eq:assump}) such that the
perturbed energy is bounded for all time $t< T^\star=\log\left(1+\frac{C}{ \|%
\mathbf{J}_{\mathrm{imp}}\|_\infty } \right)$.
\end{prop}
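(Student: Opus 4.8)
The plan is to turn the perturbed energy dissipation identity of the preceding proposition into a closed scalar differential inequality for $Y(t):=1+\mathcal{E}_{\mathrm{tot}}^{\mathrm{pert}}(t)\ge 1$ and then to integrate it explicitly. Since the energy bound is itself what we are proving, the argument is run as a continuation argument: let $[0,T_{\max})$ be the maximal interval on which $Y$ stays bounded; on it all the a priori estimates of Section~\ref{ssec:ape} that use only the energy remain valid, namely $0<K_-\le n_e\le K_+$ (Proposition~\ref{propnietne}, the elliptic equation being unchanged), $\|f^\ep\|_{\Ld^1\cap\Ld^\infty}$ conserved (because $\mathrm{div}_\vv\mathbf{F}=0$ still holds), the moment bound $\|n_I\mathbf{u}_I\|_{5/4}\le C\,\mathcal{E}_I^{4/5}$ from \eqref{current}, and --- applying Theorem~\ref{theoBernardi} to $\mathbf{B}_{\mathrm{pert}}$, which obeys $\mathbf{n}_x\wedge\mathbf{B}_{\mathrm{pert}}=0$ and $\nabla\cdot\mathbf{B}_{\mathrm{pert}}=0$ --- the bound $\|\mathbf{B}_{\mathrm{pert}}(t)\|_6\le C\big(\|\mathbf{J}_{\mathrm{pert}}(t)\|_2+(\mathcal{E}_m^{\mathrm{pert}}(t))^{1/2}\big)$. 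Throughout we use $\|\mathbf{J}_{\mathrm{imp}}\|_\infty\le\|\mathbf{B}_{\mathrm{imp}}\|_{1,\infty}\le C$ from \eqref{eq:assump} and the fact that the electrostatic and free-energy terms are nonnegative, so $\mathcal{E}_I,\mathcal{E}_m^{\mathrm{pert}}\le Y$.

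The core step is to bound the source $S=S_1+S_2+S_3$. For $S_1=-\int_\Omega\eta\,\mathbf{J}_{\mathrm{imp}}\cdot\mathbf{J}_{\mathrm{pert}}$, for the $\mathbf{B}_{\mathrm{imp}}$--pieces of $S_2$ and $S_3$, and for the $\mathbf{B}_{\mathrm{pert}}$--piece of $S_2$, Young's inequality with $|\mathbf{J}_{\mathrm{imp}}|,|\mathbf{B}_{\mathrm{imp}}|\le C$, $n_e^{-1}\le K_-^{-1}$, $\eta\ge\eta_{\mathrm{min}}$, $\|n_I\mathbf{u}_I\|_1\le|\Omega|^{1/5}\|n_I\mathbf{u}_I\|_{5/4}$ and $\|\mathbf{J}_{\mathrm{imp}}\|_\infty\le C$ gives contributions bounded by $\tfrac18\int_\Omega\eta|\mathbf{J}_{\mathrm{pert}}|^2+C_1\,Y$. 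The delicate term is the $\mathbf{B}_{\mathrm{pert}}$--piece of $S_3$, $\int_\Omega n_e^{-1}(\mathbf{J}_{\mathrm{imp}}\wedge\mathbf{B}_{\mathrm{pert}})\cdot n_I\mathbf{u}_I$: controlling $\mathbf{J}_{\mathrm{imp}}$ in $\Ld^\infty$, $\mathbf{B}_{\mathrm{pert}}$ in $\Ld^6$ via the estimate above, and $n_I\mathbf{u}_I$ in $\Ld^{6/5}\supset\Ld^{5/4}$ (bounded domain), H\"older gives
\[
\Bigl|\int_\Omega n_e^{-1}(\mathbf{J}_{\mathrm{imp}}\wedge\mathbf{B}_{\mathrm{pert}})\cdot n_I\mathbf{u}_I\Bigr|\;\le\;C\,\|\mathbf{J}_{\mathrm{imp}}\|_\infty\bigl(\|\mathbf{J}_{\mathrm{pert}}\|_2+(\mathcal{E}_m^{\mathrm{pert}})^{1/2}\bigr)\mathcal{E}_I^{4/5},
\]
and a Young inequality absorbs the $\|\mathbf{J}_{\mathrm{pert}}\|_2$ factor into $\tfrac18\int_\Omega\eta|\mathbf{J}_{\mathrm{pert}}|^2$, leaving a term $\le C_2\|\mathbf{J}_{\mathrm{imp}}\|_\infty\,Y^{8/5}$. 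Collecting everything and dropping the favourable remaining dissipation yields
\[
\frac{d}{dt}Y\;\le\;C_1\,Y\;+\;C_2\,\|\mathbf{J}_{\mathrm{imp}}\|_\infty\,Y^{\alpha},\qquad \alpha=\tfrac85>1,
\]
the constants depending only on $K_\pm$, $\eta_{\mathrm{min}}$, $|\Omega|$, $\|f^{in,\ep}\|_\infty$ and the constant in \eqref{eq:assump}.

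It remains to integrate this Bernoulli-type inequality. Setting $w:=Y^{1-\alpha}\in(0,1]$, a decreasing function of $Y$, turns it into the \emph{linear} inequality $\dot w\ge-(\alpha-1)C_1w-(\alpha-1)C_2\|\mathbf{J}_{\mathrm{imp}}\|_\infty$, whence
\[
w(t)\;\ge\;e^{-(\alpha-1)C_1t}\,w(0)\;-\;\frac{C_2\|\mathbf{J}_{\mathrm{imp}}\|_\infty}{C_1}\bigl(1-e^{-(\alpha-1)C_1t}\bigr),
\]
whose right-hand side stays strictly positive exactly for $t<T^\star=\frac{1}{(\alpha-1)C_1}\log\!\bigl(1+\frac{C_1w(0)}{C_2\|\mathbf{J}_{\mathrm{imp}}\|_\infty}\bigr)$; this is (after renaming constants, with $w(0)=(1+\mathcal{E}_{\mathrm{tot}}^{\mathrm{pert}}(0))^{1-\alpha}$) the announced lifetime. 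For any $t_1<T^\star$ one then reads off $Y\le w(t_1)^{-1/(\alpha-1)}=:K$ on $[0,t_1]$, so $\mathcal{E}_{\mathrm{tot}}^{\mathrm{pert}}$ is bounded there, and the continuation argument gives $T_{\max}\ge T^\star$.

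The only genuine obstacle is the estimate of the $\mathbf{B}_{\mathrm{pert}}$--piece of $S_3$: one must combine the $\HH^1$--regularity of $\mathbf{B}_{\mathrm{pert}}$ (Theorem~\ref{theoBernardi}) with the $\Ld^{5/4}$--moment bound \eqref{current} and pick the H\"older/Young exponents so that the $\|\mathbf{J}_{\mathrm{pert}}\|_2$ factor is absorbable --- this is precisely what produces the superlinear power and hence the \emph{finite} lifetime $T^\star$. The rest is a routine combination of the estimates of Section~\ref{ssec:ape} with the explicit ODE integration.
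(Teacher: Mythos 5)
Your proof is correct and follows essentially the same route as the paper's: bound the source $S$ by H\"older's inequality, the moment estimate \eqref{eq:54} and the embedding of Theorem~\ref{theoBernardi} applied to $\mathbf{B}_{\mathrm{pert}}$, absorb the $\|\mathbf{J}_{\mathrm{pert}}\|_2$ factors into the resistive dissipation, and integrate the resulting superlinear differential inequality for the perturbed energy to obtain the logarithmic lifetime $T^\star=\log\bigl(1+C/\|\mathbf{J}_{\mathrm{imp}}\|_\infty\bigr)$. The differences are only in bookkeeping --- you pair $\mathrm{L}^6$ with $\mathrm{L}^{6/5}$ where the paper pairs $\mathrm{L}^5$ with $\mathrm{L}^{5/4}$, you integrate via the Bernoulli substitution $w=Y^{1-\alpha}$ rather than the paper's $e^{-\sigma t}$ weighting followed by a time rescaling, and you land on the exponent $8/5$ where the paper retains $13/10$ (both exceed one, so the conclusion is unchanged); your explicit continuation argument justifying the use of the a priori bounds on the maximal interval is a welcome point of rigor that the paper leaves implicit.
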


\begin{rem}
Our analysis of weak stability can easily be extended to this non-homogeneous boundary condition  for 
$t<T^\star$. This estimate expresses the interest of a good control on the
imposed current. Indeed the smaller is $\|\mathbf{J}_{\mathrm{imp}}\|_\infty $,
the greater $T^\star$ is.
\end{rem}

\begin{proof} (of proposition \ref{prop33}).
We observe that the new terms in $S$
are proportional to $ \mathbf{J}_{\rm imp}$.
Therefore two cases occur.
\\
\\
\noindent {\bf First case: $ \mathbf{J}_{\rm imp}=0$.} This idealized
case might be encountered in Tokamaks: for example if 
$\mathbf{B}_{\rm imp}= F \nabla \theta$
where $\theta$ is the  toroidal angle and $F$ is a constant.
Physically it corresponds to an imposed exterior magnetic with
vanishing current \cite{sart}. The magnetic lines  of
$\mathbf{B}_{\rm imp}$ form a ring.
In this  case $S=0$, so the perturbed  energy dissipation
relation has the same form as (\ref{estimationNRJ}).
It turns out that  the perturbed energy is bounded for all times,
 which indeed corresponds to the claim since  $ \mathbf{J}_{\rm imp}=0$.
\\
\\
\noindent {\bf Second case: $ \mathbf{J}_{\rm imp}\neq 0$.} 
This situation is  more realistic  in Tokamaks, 
since the magnetic lines form an helix or at least must be close
to helicoidal geometry \cite{sart}. This is the general case.

Let $\sigma>0$ be a given positive number. 
The energy identity writes also 
\begin{equation} \label{eq:spec1}
\frac{d}{dt}\left( e^{-\sigma t}\mathcal{E}_{\rm tot}^{\rm pert}\right)
=Re^{-\sigma t}, \quad
R=-\sigma \mathcal{E}_{\rm tot}^{\rm pert}
-\int_{\Omega }\eta \Bigl| \mathbf{J}_{\rm pert}\Bigr|^{2}dx +S.
\end{equation}
Our goal is to 
control  the source term $S$ in $R$ as much as possible by 
$-
\sigma \mathcal{E}_{\rm tot}^{\rm pert}-
\eta \| \mathbf{J}_{\rm pert} \|_2
$
and to control the remaining part with a Gronwall technique.

Using the assumption (\ref{eq:assump}), the fact that $\eta$ is constant
and the H\"older inequality between the conjugated spaces $L^5(\Omega)$
and $L^\frac54(\Omega)$, 
we can write
$$
\left| S\right|
\leq 
\left(
\alpha_1 \| \mathbf{J}_{\rm pert} \|_2
+\alpha_2 \| \mathbf{J}_{\rm pert} \|_2
\| \mathbf{B}_{\rm pert} \|_2
+\alpha_2 \| \mathbf{B}_{\rm imp}\|_5\| n_I 
\mathbf{\ u}_{I} \|_{\frac54 } +\alpha_2
\| \mathbf{B}_{\rm pert} \|_5 
\| n_I \mathbf{\ u}_{I} \|_{\frac54 }\right) \|
 \mathbf{J}_{\rm imp} \|_\infty 
$$
with $\alpha_1=\eta$ and $\alpha_2=\|n_e^{-1}\|_\infty$.
Let $\epsilon>0$ be an arbitrary  real number.
Then
$$
\alpha_1 \| \mathbf{J}_{\rm pert} \|_2
+
\alpha_2 \| \mathbf{J}_{\rm pert} \|_2
\| \mathbf{B}_{\rm pert} \|_2
\leq
\frac{\alpha_1}{2\epsilon}  
+\epsilon
\frac{\alpha_1+\alpha_2}2
 \| \mathbf{J}_{\rm pert} \|_2^2
+
\frac{\alpha_2}{2\epsilon} 
\| \mathbf{B}_{\rm pert} \|_2^2.
$$
The identity (\ref{eq:54}), written as
$
\| n_I 
\mathbf{\ u}_{I} \|_{\frac54 }\leq \alpha_4 \mathcal{E}_{I}^{\frac45}
$, implies successively the controls
$$
 \| \mathbf{B}_{\rm imp}\|_5\| n_I 
\mathbf{\ u}_{I} \|_{\frac54 }
\leq \alpha_3 \mathcal{E}_{I}^{\frac45}
$$ 
$$
\| \mathbf{B}_{\rm pert} \|_5
\| n_I 
\mathbf{\ u}_{I} \|_{\frac54 }
\leq \alpha_5
\left(
\| \mathbf{B}_{\rm pert} \|_2+
\|  \mathbf{J}_{\rm pert} \|_2
\right)
\mathcal{E}_{I}^{\frac45}
\leq 
\alpha_6 \left(
\mathcal{E}_{m}^{ \rm pert}\right)^\frac12
 \mathcal{E}_{I}^{\frac45}
+
\epsilon \frac{\alpha_5}2\|  \mathbf{J}_{\rm pert} \|_2^2+
\frac{\alpha_5}{2\epsilon }  \mathcal{E}_{I}^{\frac8{10}}.
$$
So the right hand side in (\ref{eq:spec1}) is bounded by
$$
R
\leq
\beta_1+ \beta_2  
\mathcal{E}_{I}^{\frac45}+
\beta_3
 \left(
\mathcal{E}_{m}^{ \rm pert}\right)^\frac12
 \mathcal{E}_{I}^{\frac45}
$$
for some constants $\beta_{1,2,3}$.
Since the electronic density is bounded and
$0\leq \mathcal{E}_{I}+ \mathcal{E}_{m}^{ \rm pert}
\leq \mathcal{E}_{\rm tot}^{\rm pert}$
by construction, we also have 
$$
R
\leq 
 \gamma_1+ \gamma_2
 \left( \mathcal{E}_{\rm tot}^{\rm pert} \right)^\frac45
+
\gamma_3 
 \left( \mathcal{E}_{\rm tot}^{\rm pert} \right)^\frac{13}{10}
$$
for some constants $\gamma_{1,2,3}$ which do not depend on time.
Since
$y^{\frac45}\leq 1+ y^{\frac{13}{10}}$ for positive $y$, 
 we get the more compact form
$$
R\leq \delta_1 + \delta_2 
 \left( \mathcal{E}_{\rm tot}^{\rm pert} \right)^\frac{13}{10}
$$
for some constants $\delta_{1,2}$ which do not depend on time.

We see that the right hand side is more than linear with respect to
$\mathcal{E}_{\rm tot}^{\rm pert}$ due to the power $\frac{13}{10}$.
As a consequence this inequality cannot  prove that 
$e^{-\sigma t}\mathcal{E}_{\rm tot}^{\rm pert}$ or
$\mathcal{E}_{\rm tot}^{\rm pert}$
 is bounded for all time.

Next we wish to obtain a evaluation of the time of existence with respect
to $\|\mathbf{J}_{\rm imp} \|_\infty$.
We set $u(t)=e^{-\sigma t}\mathcal{E}_{\rm tot}^{\rm pert}$. One 
can simplify the inequality as
$$
u'(t)\leq \left( \delta_1+\delta_2 u^{\frac{13}{10}}\right)
\|\mathbf{J}_{\rm imp}
\|_\infty e^{\frac{3\sigma}{10} t}.
$$
Rescaling of the time variable as 
$d\tau=\|\mathbf{J}_{\rm imp}
\|_\infty e^{\frac{3\sigma}{10} t}dt$, that is 
$$
\tau= \frac{10}3 \|\mathbf{J}_{\rm imp}
\|_\infty \left( e^{\frac{3\sigma}{10} t}-1\right),
$$
yields the inequality
$
\frac{d}{d\tau} u \leq \delta_1+\delta_2 u^{\frac{13}{10}}$.
It is finally convenient to define $v=\delta_1+\delta_2 u^{\frac{13}{10}}$
so that
$$
v'(t)=\delta_2 \frac{13}{10}u^{\frac{3}{10}}u'(t)
\mbox{ which yields }
v'(t)\leq
\delta_3 v^{\frac{3}{13}  }v= 
\delta_3  v^{\frac{16}{13}  }.
$$
Therefore
$
-\frac{d}{d\tau}v^{-\frac{3}{13}  }\leq \delta_4=\frac{3}{13}\delta_3.
$ which implies
$
v^{-\frac{3}{13}  }(0)
-
v^{-\frac{3}{13}  }(\tau)\leq \delta_4\tau$. It yields
$$
v^{\frac{3}{13}  }(\tau)\leq \frac1{v^{\frac{3}{13}  }(0)-\delta_4 \tau  }
$$
which is valid for $\tau<\tau^\star= \frac{ v^{\frac{3}{13}  }(0) }{\delta_4}$.
Going back to the time variable $t$, the solution is defined for
$
t<T^\star
$ where
$$
\frac{10}3 \|\mathbf{J}_{\rm imp}
\|_\infty \left( e^{\frac{3\sigma}{10} T^\star}-1 \right)=\tau^\star.
$$
The proof is complete.


\end{proof}

\section{Construction of an approximate solution}

In order to complete our theory, we now detail how to use the so-called splitting strategy,
which is a constructive method, for the design
of   an approximate solution to the system
\begin{equation}  \label{model:const}
\left\{ 
\begin{array}{ll}
{\displaystyle-}\lambda ^{2}\Delta {\ln {n_{e}}={n_{I}-n_{e}}}, & \quad (a)
\\[3mm] 
{\displaystyle\frac{\partial \mathbf{B}}{\partial {t}}-\nabla \wedge }\left( 
\frac{1}{n_{e}}n_{I}\mathbf{\ u}_{I}\wedge \mathbf{B}\right) +{\nabla \wedge
(}\frac{1}{n_{e}}\mathbf{J}\wedge \mathbf{B)}{+\nabla \wedge \left( \eta
\nabla \wedge \mathbf{B}\right) =0}, & \quad (b) \\[3mm] 
{\displaystyle\frac{\partial {f}}{\partial {t}}+\mathbf{v}.\nabla f+ \frac{%
\partial }{\partial \mathbf{v}}\Biggl[\left( (-\frac{{T_{\mathrm{e}}}}{{n_{e}%
}} \nabla {n_{e}+}\frac{\mathbf{J}-n_{_{I}}\mathbf{u}_{I}}{n_{e}}\wedge 
\mathbf{\ B})+\mathbf{v}\wedge \mathbf{B}\right) f\biggr]=0}, & \quad (c) .%
\end{array}%
\right.
\end{equation}%
The idea is clearly inspired from numerical methods.
It consists of a convenient splitting strategy {\it \`a la }
Strang, together
with the linearization and freezing of certain coefficients {\it \`a la }
Temam.
The main point is to decompose the total system
in simpler  parts which are conceptually  easier to solve
or  easier to analyse,   preserving at the same time 
 the decay of the energy identity 
$$
\mathcal{E}_{tot}=
\frac{1}{2}\int_{\Omega }{\int_{\mathbb{R}^{3}}f(t,x,%
\mathbf{v})|\mathbf{v}}|^{2}{d\mathbf{v}}dx
+
\frac{1}{2}\int_{\Omega }|\mathbf{B}(t,x)|^{2}dx+
\frac{\lambda ^{2}}{2}%
\int_{\Omega }\left\vert \nabla _{x}\left( \ln n_{e}\right) \right\vert
^{2}dx+\int_{\Omega }\left( n_{e}\ln n_{e}-n_{e}+1\right) dx.
$$
Let  $\Delta t >0$ be a time step 
which is ultimately destinated to tend to zero.
We consider that 
$$
f(t_k)\mbox{ and }\mathbf{B}(t_k)
$$
are known at the beginning of the time step
$t_k=k\Delta t$.
We restrict the presentation to the core  of the method.
This constructive method also provides additional insights into the 
mathematical structure of the model.

\subsection{Vlasov-Poisson}

One first solves during the time step $\Delta t$
$$
\left\{ 
\begin{array}{ll}
{\displaystyle-}\lambda ^{2}\Delta {\ln {n_{e}}={n_{I}-n_{e}}}, & \quad (a)
\\[3mm] 
{\displaystyle \frac{\partial \mathbf{B}}{\partial {t}}=0, }& \quad (b) \\
{\displaystyle\frac{\partial {f}}{\partial {t}}+\mathbf{v}.\nabla f+
 \frac{\partial }{\partial \mathbf{v}}
\Biggl[ -\frac{T_{\mathrm{e}}}{n_{e}} \nabla {n_{e}} f\biggr]=0}. & \quad (c) %
\end{array}%
\right.
$$
This is a  non linear Vlasov-Poisson equation which can
be considered as standard even if we know
very little mathematical literature about it.
It is easy to show that regular solutions preserve
the energy.
This procedure  defines a new solution
 $$
{f}^\star(t_k+\Delta t)
\mbox{ and }
\mathbf{B}^\star(t_k+\Delta t)=\mathbf{B}(t_k).
$$

\subsection{Magnetic part, first stage}

For convenience we split the magnetic part of the equations
 in
two stages, the first one which is fundamental, and the second
which deals with less involved terms.
The first stage writes 
$$
\left\{ 
\begin{array}{ll} 
{\displaystyle\frac{\partial \mathbf{B}}{\partial {t}}-\nabla \wedge }\left( 
\frac{1}{n_{e}}n_{I}\mathbf{\ u}_{I}\wedge
{  \mathbf{B}}_{\mbox{frozen}}
\right) +{\nabla \wedge
(}\frac{1}{n_{e}}\mathbf{J}\wedge \mathbf{B}_{\mbox{frozen}})
{+\nabla \wedge \left( \eta
\nabla \wedge \mathbf{B}\right) =0}, & \quad (b) \\[3mm] 
{\displaystyle\frac{\partial {f}}{\partial {t}}
+ \frac{\partial }
{\partial \mathbf{v}}\Biggl[\left( \frac{\mathbf{J}}{n_{e}}\wedge 
\mathbf{\ B}_{\mbox{frozen}})\right) f\biggr]=0}, & \quad (c).%
\end{array}%
\right.
$$ 
The initial data is provided by the previous step of the algorithm
$$
f^\square(t_k)={f}^\star(t_k+\Delta t)
\mbox{ et }
\mathbf{B}^\square(t_k)=\mathbf{B}^\star(t_k+\Delta t).
$$
The frozen magnetic field is  
$$
\mathbf{B}_{\mbox{frozen}}=\mathbf{B}^\star(t_k+\Delta t).
$$ 
This frozen field is constant in time during the whole
time step.
This trick was first introduced by Temam in the context
of 
trilinear forms and Navier-Stokes equations for magnetic equations
\cite{temam0,temam}, see also \cite{gerbeau}.

One notices that  equation b) is now a linear one,
even if  equation c) is still formerly non linear because it has
a $n_e$ dependence.
However an explicit procedure allows to compute the solution.
Indeed solutions of  equation  c)  are such that  
$$
\partial_t n_I=0.
$$
It means that  $n_I$ and  $n_e$  
are  frozen quantities
$$
n_I={n_I}^{\mbox{frozen}} \mbox{ et }n_e={n_e}^{\mbox{frozen}}.
$$
One has 
$$
\partial_t  n_{I}\mathbf{\ u}_{I}=  \mathbf{J} \wedge
\mathbf{d},
\quad \mathbf{d}=
\frac{{n_I}^{\mbox{frozen}} 
 \mathbf{\ B}_{\mbox{frozen}} }{ {n_e}^{\mbox{frozen}} }
$$
which yields 
$$
 n_{I}\mathbf{\ u}_{I}=
 n_{I}\mathbf{\ u}_{I}(t_k)+\int_{t_k}^t 
\nabla \wedge \mathbf{B}(s)ds \wedge
\mathbf{d}.
$$
It shows  that $n_I\mathbf{\ u}_{I}$ integro-differential 
and linear 
with respect to $\mathbf{B}$.
Plugging this form of  $n_I\mathbf{\ u}_{I}$
 in  equation  b), 
we end up with a linear equation for
  $\mathbf{B}$. 
This linear integro-differential
equation is  well posed under general assumptions.


Once the magnetic field is computed, we can report
%
the current 
 $\mathbf{J}$ in  equation  c) which
is now easily solved with the method of characteristics.
It is immediate that, due to the resistive operator,
 the energy decreases during this step.
Since  $n_i$ is constant, the electronic energy is constant.

The solution at the end of this stage is referred to as 
 $$
{f}^\square(t_k+\Delta t)
\mbox{ and  }
\mathbf{B}^\square(t_k+\Delta t)=\mathbf{B}(t_k).
$$

\subsection{Magnetic part, second stage}

It remains to solve 
$$
\left\{ 
\begin{array}{ll} 
{\displaystyle\frac{\partial \mathbf{B}}{\partial {t}} =0}, & \quad (b) \\[3mm] 
{\displaystyle\frac{\partial {f}}{\partial {t}}
+ \frac{%
\partial }{\partial \mathbf{v}}\Biggl[\left( 
\frac{-n_{_{I}}\mathbf{u}_{I}}{n_{e}}\wedge 
\mathbf{\ B})+\mathbf{v}\wedge \mathbf{B}\right) f\biggr]=0}, & \quad (c) , 
\end{array}%
\right.
$$
with prescribed initial data
$$
f^\bullet(t_k)={f}^\square(t_k+\Delta t)
\mbox{ and }
\mathbf{B}^\bullet(t_k)=\mathbf{B}^\square(t_k+\Delta t).
$$
Since the magnetic field is frozen
$$
\mathbf{B}=\mathbf{B}_{\mbox{frozen}  },
$$
 equation c) is greatly simplified.
Once again
the ionic density  $n_I$ and the electronic density
 $n_e$ are frozen
$$
n_I={n_I}^{\mbox{frozen}} \mbox{ and }n_e={n_e}^{\mbox{frozen}}.
$$
A consequence is 
$$
\partial_t n_I \mathbf{u}_{I}=
n_{_{I}}\mathbf{u}_{I}\wedge \mathbf{d},\quad
\mathbf{d}=\left( -
\frac{ {n_I}^{\mbox{frozen}} }{ {n_e}^{\mbox{frozen}} }
+1\right) \mathbf{B}_{\mbox{frozen}  }.
$$
The solution of this  linear equation is immediate.  
Therefore $n_I \mathbf{u}_{I}$ is known.
And finally the method of characteristics
can be used to solve c).
The energy is preserved during this second magnetic stage.

\subsection{Iterations}

The previous procedure allows us to design an approximate solution
$$
f(t_k+\Delta t)={f}^\bullet(t_k+\Delta t)
\mbox{ and }
\mathbf{B}(t_k+\Delta t)=\mathbf{B}^\bullet(t_k+\Delta t)
$$
one time step after the other.
With this procedure the total energy decreases and we can apply our stability analysis for proving existence.


%
\appendix


\section{Control on moments of $f_I$}

\label{ap:moments} 

Several type of controls on velocity moments of $f_I$ are available, see 
\cite{BP_bams}. Here we recall one of the most fundamental control in $L^p$
spaces based on the kinetic energy.

\begin{lemme}
\label{ni53} Let $f \in L^{\infty}_{t,x,\mathbf{v}}((0,T) \times \Omega
\times \mathbb{R}^3) \cap L_t^{\infty}(0,T;L^1_{x,\mathbf{v}}(\Omega\times
\mathbb{R}^3, |\mathbf{v}|^2 dxd\mathbf{v})$. Define  $n_{I}$ by 
\eqref{defni} and $n_{I} \mathbf{u}_i$ by \eqref{defui}. Then $n_{I} \in L_t^{\infty}\big(0,T;L^{5/3}_{x}(\Omega)\big)$, $n_{I} \mathbf{u}_I \in
L_t^{\infty}\big(0,T;L^{5/4}_{x}(\Omega)\big)$ and we have for all $t\in[0,T]$, 
\begin{equation}\label{eq:43}
 \|n_{I}(t,\cdot)\|_{5/3} \leq C {\| f
(t,\cdot,\cdot) \|_{\infty}}^{2/5} \left(\int_{\Omega} \int_{\mathbb{R}^3} f
(t,x,\mathbf{v})|\mathbf{v}|^2d\mathbf{v }dx \right)^{3/5},
\end{equation}
\begin{equation}  \label{eq:54}
 \|n_{I}(t,\cdot) \mathbf{u}_I \|_{5/4} \leq
C^{\prime}{\| f (t,\cdot,\cdot) \|_{\infty}}^{1/5} \left(\int_{\Omega} \int_{
\mathbb{R}^3} f (t,x,\mathbf{v})|\mathbf{v}|^2d\mathbf{v }dx \right)^{4/5}.
\end{equation}
\end{lemme}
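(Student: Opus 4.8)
The plan is to prove the two bounds \eqref{eq:43}--\eqref{eq:54} by the standard interpolation argument that splits the velocity integral defining $n_I$ (resp.\ $n_I\mathbf{u}_I$) at a radius $R>0$, uses $L^\infty$ control on small velocities and the kinetic-energy weight on large velocities, and finally optimizes in $R$. First, for $n_I$, I would write, for a.e.\ $(t,x)$,
$$
n_I(t,x)=\int_{\mathbb{R}^3}f\,d\mathbf{v}=\int_{|\mathbf{v}|\le R}f\,d\mathbf{v}+\int_{|\mathbf{v}|>R}f\,d\mathbf{v}
\le \frac{4\pi}{3}R^3\,\|f(t,\cdot,\cdot)\|_\infty+\frac{1}{R^2}\int_{\mathbb{R}^3}f|\mathbf{v}|^2\,d\mathbf{v}.
$$
Optimizing the right-hand side over $R$ (i.e.\ choosing $R$ so that the two terms balance, $R^5\sim \|f\|_\infty^{-1}\int f|\mathbf{v}|^2 d\mathbf{v}$) gives the pointwise estimate
$$
n_I(t,x)\le C\,\|f(t,\cdot,\cdot)\|_\infty^{2/5}\Big(\int_{\mathbb{R}^3}f(t,x,\mathbf{v})|\mathbf{v}|^2\,d\mathbf{v}\Big)^{3/5}.
$$
Raising this to the power $5/3$, integrating in $x$ over $\Omega$, and pulling out $\|f(t,\cdot,\cdot)\|_\infty^{2/3}$ yields $\|n_I(t,\cdot)\|_{5/3}^{5/3}\le C\|f\|_\infty^{2/3}\int_\Omega\int f|\mathbf{v}|^2\,d\mathbf{v}\,dx$, which is exactly \eqref{eq:43} after taking the $3/5$ power; the $L^\infty_t$ statement follows since the right-hand side is bounded uniformly in $t$ by hypothesis.

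For the current, the same splitting applied to $|n_I\mathbf{u}_I|\le\int|\mathbf{v}|f\,d\mathbf{v}$ gives
$$
\Big|\int_{\mathbb{R}^3}\mathbf{v}f\,d\mathbf{v}\Big|\le \int_{|\mathbf{v}|\le R}|\mathbf{v}|f\,d\mathbf{v}+\int_{|\mathbf{v}|>R}|\mathbf{v}|f\,d\mathbf{v}\le C R^4\|f(t,\cdot,\cdot)\|_\infty+\frac1R\int_{\mathbb{R}^3}f|\mathbf{v}|^2\,d\mathbf{v},
$$
and optimizing over $R$ (now $R^5\sim\|f\|_\infty^{-1}\int f|\mathbf{v}|^2 d\mathbf{v}$ again) produces the pointwise bound
$$
|n_I\mathbf{u}_I(t,x)|\le C'\,\|f(t,\cdot,\cdot)\|_\infty^{1/5}\Big(\int_{\mathbb{R}^3}f(t,x,\mathbf{v})|\mathbf{v}|^2\,d\mathbf{v}\Big)^{4/5}.
$$
Raising to the power $5/4$, integrating over $\Omega$, and taking the $4/5$ power gives \eqref{eq:54}, and the $L^\infty_t$ membership follows as before. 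I would double-check the exponent bookkeeping: in both cases the power of $\|f\|_\infty$ is chosen so that the energy integral appears to a power $<1$ and the $L^p$-norm in $x$ exponent $p$ comes out so that $p\times(\text{energy power})=1$, forcing $p=5/3$ and $p=5/4$ respectively.

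There is no serious obstacle here: the only mild subtleties are (i) making sure all integrals are finite a.e.\ in $x$, which is guaranteed by $f\in L^\infty_{t,x,\mathbf{v}}\cap L^\infty_t(L^1_{x,\mathbf{v}}(|\mathbf{v}|^2))$, so that for a.e.\ $x$ the slices $f(t,x,\cdot)$ lie in $L^\infty\cap L^1(|\mathbf{v}|^2 d\mathbf{v})$; (ii) handling the degenerate case where $\int f|\mathbf{v}|^2 d\mathbf{v}=0$ at some $x$ (then $f(t,x,\cdot)=0$ a.e.\ and the bound is trivially $0=0$), or letting $R\to\infty$; and (iii) keeping track of the dimensional constant $C=C_d$ coming from $|\{|\mathbf{v}|\le R\}|$ in $\mathbb{R}^3$. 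These are all routine, so the proof is essentially the two displays above plus the integration-and-optimization bookkeeping.
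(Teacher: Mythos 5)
Your proof is correct and follows exactly the paper's argument: split the velocity integral at radius $R$, bound the inner part by $\|f\|_\infty$ times the ball volume and the outer part by the kinetic-energy weight, optimize in $R$, then integrate in $x$. The paper only writes out the case of $n_I$ and leaves $n_I\mathbf{u}_I$ to the reader, which you carry out correctly with the same bookkeeping.
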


\noindent 
\begin{proof} 
We only recall the proof of the result on $n_{I}$. Let $R>0$. We have
\[ 
\begin{aligned}
 n_{I}(t,x)  & =\int_{|\vv|\leq R }f (t,x,\vv) d\vv +\int_{|\vv|\geq R }f (t,x,\vv) d\vv \\[3mm]
              &\leq  C R^3 \|f (t,\cdot,\cdot)\|_{\infty}+\frac{1}{R^2} \int_{|\vv|\leq R }f (t,x,\vv) |\vv|^2d\vv .
\end{aligned}
\]
Then by minimization over $R$ we get
\[n_{I}(t,x) \leq C \, \|f (t,\cdot,\cdot)\|_{\infty}^{2/5} \, \left(\int_{\R^3}f (t,x,\vv) |\vv|^2d\vv \right)^{3/5}\]
and after integration we obtain the claim.
\end{proof}


\section{Uniform lower bound on $n_e$}

\label{ap:elliptic} 

The purpose of this section is to prove several properties that we have used
throughout the paper for the elliptic equation 
\begin{equation*}
\left\{
\begin{aligned}
{\displaystyle-\lambda }^{2}{\Delta \ln {n_{e}}+n_{e}={n_{I}}}, \qquad & x
\in \Omega, \\[3mm]
\frac{\partial n_e}{\partial \nu} =0, \qquad & x \in \partial \Omega,%
\end{aligned}
\right.
\end{equation*}
with a right hand side data satisfying $n_{I} \geq 0$, $\int n_{I}
=M_{initial}>0$ and $n_{I} \in L^{5/3}$. \newline

We are going to prove the estimate

\begin{lemme}
\label{encadrementne} Let $n_{I} \in L^{\infty}(0,T;L^{5/3}\cap L^1(\Omega))$
and $n_e$ a strong solution to the above equation, then we have the
two-sided control 
\begin{equation}  \label{elliptic}
0 < K_-(\| n_{I} \|_{5/3}) \leq n_e \leq K_+(\| n_{I} \|_{5/3}),
\end{equation}
for some continuous positive functions $K_\pm (\cdot)$ with $K_+ >1$
increasing, $K_-$ decreasing.
\end{lemme}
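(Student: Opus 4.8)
The plan is to obtain the upper bound from a one-sided comparison with a \emph{linear} elliptic problem, and the lower bound from an oscillation estimate produced by elliptic regularity together with the conserved mass; throughout I use that $\Omega\subset\mathbb{R}^{3}$ is bounded and of class $\mathcal{C}^{1,1}$, so that the $W^{2,p}$ theory for the Neumann problem is available (all constants below may also depend on $\lambda$, $\Omega$ and $M_{initial}$). Write $v=\ln n_{e}$, a strong solution of $-\lambda^{2}\Delta v+e^{v}=n_{I}$ with $\partial_{\nu}v=0$ on $\partial\Omega$. For the upper bound I would use the convexity inequality $e^{t}\geq 1+t$: pointwise $-\lambda^{2}\Delta v+v=n_{I}-e^{v}+v\leq n_{I}-1$, so, after multiplying by a nonnegative test function and integrating by parts (the Neumann condition killing the boundary term), $v$ is a weak subsolution of the linear problem $-\lambda^{2}\Delta\bar v+\bar v=n_{I}-1$, $\partial_{\nu}\bar v=0$. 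Testing the difference against $(v-\bar v)_{+}$ and using the coercivity of $-\lambda^{2}\Delta+1$ forces $(v-\bar v)_{+}\equiv0$, i.e. $v\leq\bar v$. Since $n_{I}-1\in L^{5/3}(\Omega)$, elliptic regularity for the Neumann problem gives $\bar v\in W^{2,5/3}(\Omega)\hookrightarrow L^{\infty}(\Omega)$ with $\|\bar v\|_{\infty}\leq C(1+\|n_{I}\|_{5/3})$; hence $n_{e}=e^{v}\leq K_{+}:=e^{C(1+\|n_{I}\|_{5/3})}$, which is continuous, increasing, and $>1$.

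For the lower bound I would first integrate the equation over $\Omega$: the Neumann condition kills $\int_{\Omega}\Delta v$, so $\int_{\Omega}n_{e}=\int_{\Omega}n_{I}=M_{initial}>0$. Set $g:=n_{I}-n_{e}$; by the previous step $0<n_{e}\leq K_{+}$, so $g\in L^{5/3}(\Omega)$ with $\|g\|_{5/3}\leq\|n_{I}\|_{5/3}+K_{+}|\Omega|^{3/5}$. The zero-mean function $w:=v-\frac{1}{|\Omega|}\int_{\Omega}v$ solves $-\lambda^{2}\Delta w=g$ with homogeneous Neumann data, so elliptic regularity together with the Sobolev embedding $W^{2,5/3}(\Omega)\hookrightarrow\mathcal{C}^{0,1/5}(\overline\Omega)$ (valid in dimension three, since $2-3/(5/3)=1/5>0$) makes $v$ continuous on $\overline\Omega$ and bounds its oscillation: $\max_{\overline\Omega}v-\min_{\overline\Omega}v=\max_{\overline\Omega}w-\min_{\overline\Omega}w\leq C\|g\|_{5/3}=:L$, with $L$ continuous and nondecreasing in $\|n_{I}\|_{5/3}$. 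Since the mean of $e^{v}$ over $\Omega$ equals $M_{initial}/|\Omega|$, we have $\max_{\overline\Omega}v\geq\ln(M_{initial}/|\Omega|)$, hence $\min_{\overline\Omega}v\geq\ln(M_{initial}/|\Omega|)-L$ and $n_{e}\geq K_{-}:=\frac{M_{initial}}{|\Omega|}\,e^{-L}>0$, decreasing in $\|n_{I}\|_{5/3}$; symmetrically $\min_{\overline\Omega}v\leq\ln(M_{initial}/|\Omega|)$ gives back an upper bound of the same form, and \eqref{elliptic} follows.

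The coercivity estimate, the comparison step, and the Neumann elliptic estimates are all routine on a $\mathcal{C}^{1,1}$ domain. The one genuinely delicate point is the lower bound: trying to control $\inf n_{e}$ from the mass constraint and an $\mathrm{H}^{1}$ bound on $\ln n_{e}$ alone fails, because $\mathrm{H}^{1}(\Omega)$ does not embed into $\mathcal{C}^{0}(\overline\Omega)$ in dimension three. What saves the argument is that, \emph{once} $n_{e}$ is bounded above, the right-hand side $n_{I}-n_{e}$ lies in $L^{5/3}$, which is exactly the integrability needed to put $\ln n_{e}$ into $\mathcal{C}^{0,1/5}(\overline\Omega)$ and thus to control its oscillation; the $L^{1}$ mass constraint then fixes the oscillation band around a finite value and delivers the uniform lower bound.
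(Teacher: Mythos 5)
Your proof is correct, and its second half coincides with the paper's: once $n_I-n_e$ is known to lie in $L^{5/3}$, write $-\lambda^2\Delta(\ln n_e)=n_I-n_e$ with Neumann data, use $W^{2,5/3}$ regularity and the Morrey embedding $W^{2,5/3}(\Omega)\hookrightarrow \mathcal C^{0,1/5}(\overline\Omega)$ to bound the oscillation of $\ln n_e$, and fix the additive constant through the neutrality relation $\int_\Omega n_e=\int_\Omega n_I$. Where you genuinely diverge is in how you first gain control of $n_e$ so that the right-hand side is in $L^{5/3}$: the paper multiplies the equation by $n_e^{2/3}$ and integrates by parts, which after H\"older yields the clean, constant-free bound $\|n_e\|_{5/3}\leq\|n_I\|_{5/3}$; you instead exploit the convexity inequality $e^v\geq 1+v$ to exhibit $v=\ln n_e$ as a subsolution of the linear Neumann problem $-\lambda^2\Delta\bar v+\bar v=n_I-1$ and run a comparison argument, obtaining a pointwise upper bound $n_e\leq K_+$ at once. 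The paper's route is shorter and gives a sharper integral estimate; yours delivers the upper bound of \eqref{elliptic} directly from the comparison principle, independently of the oscillation machinery (which you then only need for the lower bound), and it makes the monotonicity and continuity of $K_\pm$ in $\|n_I\|_{5/3}$ completely explicit. Both arguments correctly identify the delicate point, namely that an $\mathrm H^1$ bound on $\ln n_e$ alone cannot give a lower bound on $n_e$ in dimension three, and that the $L^{5/3}$ integrability of $n_I-n_e$ is exactly what is needed to upgrade to a uniform modulus of continuity.
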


We also recall that integration of the equation gives  the electric neutrality relation 
\begin{equation*}
\|n_e\|_{1} = \| n_{I}\|_{1} .
\end{equation*}

We can now explain why the total energy \eqref{def_energy} is well defined
for weak solutions and also at initial time. From the lower and upper bound
in \eqref{elliptic}, we conclude that $n_{e} \ln n_{e} \in L^1(\Omega)$.
Also, multiplying the equation by $\ln n_{e}$, we find 
\begin{equation*}
\lambda \int_\Omega \left|\nabla_{x} \ln n_{e} \right|^2 dx = \int_\Omega
(n_I -n _e) \ln n_{e} \leq 2 \ln K_+ \| n_{I}\|_{1}.
\end{equation*}

\noindent 
\begin{proof} Then, we argue in two steps. Firstly, we multiply the equation by $n_e^{2/3}$ and integrate by parts. The H\"older inequality gives
$$
\frac {2 \lambda^2}{3}  \int \frac{ |\nabla n_e|^2 }{n_e^{1/3}} + \int n_e^{5/3} = \int n_{I} n_e^{2/3} \leq \|n_{I}\|_{5/3} \; \| n_e \|^{2/3}_{5/3} ,
$$
from which we conclude the bound
$$
 \| n_e \|_{5/3} \leq \| n_{I} \|_{5/3} .
$$

Secondly we use the elliptic regularity theory to conclude that $\ln n_e  - \langle \ln n_e \rangle_\Omega \in W^{2,p}$, $1\leq p \leq 5/3$ and thus 
$$
 \ln n_e - \langle \ln n_e \rangle_\Omega \in L^q, \qquad \forall q>1, \qquad \frac 1 q= \frac 1p - \frac{2}{3},
$$
where $ \langle  \phi  \rangle_\Omega$ denotes the average of the $L^1(\Omega)$ function  $\phi$ over $\Omega$.
Finally, because $\frac 23 > \frac 35$ we conclude from the Morrey estimates \cite{evans} that
$$
\| \ln n_e - \langle \ln n_e \rangle_\Omega \|_\infty \leq C(\| n_{I} \|_{5/3}) .
$$
The result follows immediately thanks to the control of $\langle \ln n_e \rangle_\Omega$ through \eqref{elliptic}.
\end{proof} 


\section{Compactness of the magnetic field}

\label{ap:magnetic} 

We have also used the Sobolev injection for Maxwell equations and we recall it in this appendix. We introduce the
following  spaces
\begin{equation*}
\mathrm{H_{curl}}(\Omega)=\{\mathbf{b}\in \mathrm{L}^2(\Omega)^3 / \, \nabla
\wedge \mathbf{b} \in \mathrm{L}^2(\Omega)^3 \},
\end{equation*}
\begin{equation*}
\mathrm{H_{div}}(\Omega)= \{\mathbf{b}\in \mathrm{L}^2(\Omega)^3 / \, \nabla
\cdot\mathbf{b} \in \mathrm{L}^2(\Omega)^3 \},
\end{equation*}
\begin{equation}  \label{defXn}
X_N(\Omega)=\{ \mathbf{b} \in \mathrm{H_{curl}}(\Omega) \cap \mathrm{H_{div}}
(\Omega) \, / \quad \mathbf{b}\wedge \mathbf{n}=0 \quad \mathrm{on} \quad
\partial \Omega \}.
\end{equation}
We recall the following result (see \cite{DaLi})

\begin{theo}
\label{theoBernardi} Assume that the domain $\Omega$ is of class $\mathcal{C}
^{1,1}$. Then the space $X_N(\Omega)$ is continuously imbedded in $\mathrm{H}
^1(\Omega)^3$.
\end{theo}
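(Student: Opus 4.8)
The plan is to reduce the embedding $X_N(\Omega)\hookrightarrow \mathrm{H}^1(\Omega)^3$ to the a priori inequality
\[
\|\mathbf{b}\|_{\mathrm{H}^1(\Omega)}\le C\bigl(\|\mathbf{b}\|_{\mathrm{L}^2(\Omega)}+\|\nabla\wedge\mathbf{b}\|_{\mathrm{L}^2(\Omega)}+\|\nabla\cdot\mathbf{b}\|_{\mathrm{L}^2(\Omega)}\bigr),
\]
established first on a dense subspace of $X_N(\Omega)$ on which the computations below are legitimate. Thus the first step is to prove that $\mathcal{C}^\infty(\overline\Omega)^3\cap X_N(\Omega)$ (equivalently $\mathrm{H}^1(\Omega)^3\cap X_N(\Omega)$) is dense in $X_N(\Omega)$ for the graph norm. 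This is done by a partition of unity, localisation near $\partial\Omega$, flattening the boundary (here the $\mathcal{C}^{1,1}$ regularity is used), and regularising the tangential-trace condition $\mathbf{b}\wedge\mathbf{n}=0$ by a translation-and-mollification argument; this is classical (see \cite{DaLi}).

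The second step is the Green formula on the dense subspace. Starting from the pointwise identity $-\Delta\mathbf{b}=\nabla\wedge(\nabla\wedge\mathbf{b})-\nabla(\nabla\cdot\mathbf{b})$ and integrating by parts twice, one obtains
\[
\int_\Omega |\nabla\mathbf{b}|^2\,dx=\int_\Omega\bigl(|\nabla\wedge\mathbf{b}|^2+|\nabla\cdot\mathbf{b}|^2\bigr)\,dx+\int_{\partial\Omega}\mathcal{Q}(\mathbf{b},\mathbf{b})\,d\sigma,
\]
where $\mathcal{Q}$ is a quadratic form in the trace of $\mathbf{b}$ whose coefficients are built from the second fundamental form of $\partial\Omega$. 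These coefficients lie in $\mathrm{L}^\infty(\partial\Omega)$ precisely because $\partial\Omega$ is of class $\mathcal{C}^{1,1}$.

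The third step exploits the boundary condition. When $\mathbf{b}\wedge\mathbf{n}=0$ on $\partial\Omega$, the trace of $\mathbf{b}$ is purely normal, so $\mathcal{Q}(\mathbf{b},\mathbf{b})$ reduces to a multiple of $|\mathbf{b}\cdot\mathbf{n}|^2$ with an $\mathrm{L}^\infty$ curvature coefficient, whence
\[
\Bigl|\int_{\partial\Omega}\mathcal{Q}(\mathbf{b},\mathbf{b})\,d\sigma\Bigr|\le C(\Omega)\,\|\mathbf{b}\|_{\mathrm{L}^2(\partial\Omega)}^2 .
\]
Using the standard trace interpolation inequality $\|\mathbf{b}\|_{\mathrm{L}^2(\partial\Omega)}^2\le\delta\|\nabla\mathbf{b}\|_{\mathrm{L}^2(\Omega)}^2+C_\delta\|\mathbf{b}\|_{\mathrm{L}^2(\Omega)}^2$ (valid here since $\mathbf{b}$ is smooth), the term $\delta\|\nabla\mathbf{b}\|_{\mathrm{L}^2(\Omega)}^2$ is absorbed into the left-hand side of the Green formula, which yields the announced a priori bound on the dense subspace; the density from the first step then extends it to all of $X_N(\Omega)$, giving both the inclusion and the continuity of the embedding.

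I expect the main obstacle to be the conjunction of the density statement and the control of the curvature boundary term: this is exactly where $\mathcal{C}^{1,1}$ is indispensable, and the result fails on merely Lipschitz domains. Since both ingredients are classical, the economical route is to invoke \cite{DaLi} for the density and for the precise form of the Green formula, and to display only the absorption argument explicitly.
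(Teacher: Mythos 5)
Your sketch is a correct rendition of the classical argument (density of smooth fields, the Green/Rellich identity $\int_\Omega |\nabla\mathbf{b}|^2 = \int_\Omega(|\nabla\wedge\mathbf{b}|^2+|\nabla\cdot\mathbf{b}|^2) + \int_{\partial\Omega}\mathcal{Q}$, and absorption of the curvature boundary term via trace interpolation), and you correctly identify where $\mathcal{C}^{1,1}$ is essential and why the embedding fails for merely Lipschitz domains. The paper itself offers no proof of this theorem --- it is recalled verbatim from \cite{DaLi} --- so your proposal coincides with the standard proof in the cited reference rather than with anything in the paper.
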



\section{Kinetic averaging lemma}

\label{ap:kal} 

We recall here one result of the theory of averaging lemmas for kinetic
equations. When $f(t,x,\mathbf{v})$ is solution of a kinetic equation, it
cannot be more regular that the initial data or the right hand-side.
However, averages in velocity gain regularity. Recall that the macroscopic
quantity $\langle f\psi \rangle $ is defined as 
\begin{equation*}
\langle f\psi \rangle (t,x)=\int_{\mathbb{R}^{d}}f(t,x,\mathbf{v})\psi (
\mathbf{v})d\mathbf{v}
\end{equation*}%
where $\psi $ is a given function in $\mathcal{C}_{c}^{\infty }(\mathbb{R}
^{d})$ (i.e. smooth with compact support), the averaging lemmas aim at
proving compactness properties on $\langle f\psi \rangle $. The first
version of these averaging Lemma has been established by Golse, Lions
Perthame and Sentis \cite{GLPS} for $f\in L_{t,x,v}^{2}$ solution of the
equation $\partial _{t}f+\mathbf{v}\cdot \nabla _{x}f=S$ with $S\in L_{t,x,
\mathbf{v}}^{2}$: it was proved that locally in time $\langle f\psi \rangle
\in H_{t,x}^{1/2}$. This version has been then be extended (by complex
interpolation) for the $L^{p}$ framework, $1<p<\infty $ in \cite{DiLiM}.
Moreover, more complex versions have been proved by Di Perna, Lions and
Meyer \cite{DiLiM}, who treat the case where $S$ is the $k$-th
derivative in velocity with a fractional derivative in $x$ strictly less
than one.

We use here a version proved by Perthame and Sougadinis \cite{perthamesoug}
(the equality is the exponent is due to Bouchut \cite{bouchutgolse}) which
expresses an optimal gain of regularity (a full derivative)

\begin{theo} Let  $1<q<\infty $ and $f,$ $\mathbf{g}
=(g_{1},...,g_{d}) $ belong to $\mathrm{L}_{t,x,\mathbf{v}}^{q}(
\mathbf{R}^{1+3+3})$  and satisfy 
\begin{equation}
\frac{\partial f}{\partial t}+\mathbf{v}\cdot \nabla _{x}f=\frac{\partial }{
\partial \mathbf{v}}.\big((\mathbb{I}-\Delta _{t,x,\mathbf{v}})^{1/2}\mathbf{
g}\big).
\end{equation}
Then $\langle f\psi \rangle \in \mathrm{L}_{t,x}^{q}(\mathbf{R}
^{1+3}) $\ and there exists $C(p,\psi )$  such that  
\begin{equation*}
\Vert \langle f\psi \rangle \Vert _{q}\leq C(q,\psi )\Vert f\Vert
_{q}^{1-\alpha }\Vert \mathbf{g}\Vert _{q}^{\alpha }
\end{equation*}
 for a positive exponent $\alpha \leq \frac{1}{2}\min (\frac{1}{q},1- \frac{1}{q})$.
\end{theo}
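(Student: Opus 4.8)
The plan is to establish the estimate first in the Hilbertian case $q=2$ by Fourier analysis, and then to reach the full range $1<q<\infty$ by interpolation. For $q=2$ I would take the Fourier transform in $(t,x)$, with dual variables $(\tau,\xi)$, so that the kinetic equation becomes the pointwise algebraic relation $i(\tau+\mathbf v\cdot\xi)\,\widehat f(\tau,\xi,\mathbf v)=\widehat S(\tau,\xi,\mathbf v)$ with $S=\partial_{\mathbf v}\cdot\big((\mathbb I-\Delta_{t,x,\mathbf v})^{1/2}\mathbf g\big)$, and $\widehat{\langle f\psi\rangle}(\tau,\xi)=\int_{\mathbb R^3}\widehat f\,\psi\,d\mathbf v$. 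Fixing a scaling weight $\mu\sim(1+\tau^2+|\xi|^2)^{1/2}$ and a parameter $\delta\in(0,1]$ to be optimized, I would split the velocity integral with a smooth cut-off separating the resonant set $\{|\tau+\mathbf v\cdot\xi|\le\delta\mu\}$ from its complement.

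On the resonant set one keeps $\widehat f$ and applies Cauchy--Schwarz in $\mathbf v$: since $\psi$ has bounded support, the slab $\{\mathbf v\in\operatorname{supp}\psi:\ |\tau+\mathbf v\cdot\xi|\le\delta\mu\}$ has measure $\lesssim\delta\mu/|\xi|$ (the degenerate case $|\xi|\ll\mu$ being handled through the $\tau$-variable), which bounds that piece by $\lesssim(\delta\mu/|\xi|)^{1/2}\,\|\widehat f(\tau,\xi,\cdot)\|_{L^2_{\mathbf v}}$. On the complement one inverts the symbol, $\widehat f=\widehat S/\big(i(\tau+\mathbf v\cdot\xi)\big)$, integrates by parts in $\mathbf v$ to move the velocity derivative off $S$ and onto the smooth factor $\psi(1-\chi)/(\tau+\mathbf v\cdot\xi)$, and uses self-adjointness in $\mathbf v$ to transfer the operator $(\mathbb I-\Delta_{t,x,\mathbf v})^{1/2}$ as well; because $\mathbf v$ stays in a fixed bounded set on $\operatorname{supp}\psi$ the symbol $(\mathbb I-\Delta)^{1/2}$ costs only a factor $\lesssim\mu$, while every derivative landing on $(\tau+\mathbf v\cdot\xi)^{-1}$ or on the cut-off costs $|\xi|/(\delta\mu)$. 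This produces a bound of the shape $\delta^{-a}\mu^{b}|\xi|^{c}\,\|\widehat{\mathbf g}(\tau,\xi,\cdot)\|_{L^2_{\mathbf v}}$ with explicit exponents. Choosing $\delta=\delta(\tau,\xi)$ to balance the two contributions, then squaring, integrating in $(\tau,\xi)$, using H\"older with conjugate exponents $1/(1-\alpha)$ and $1/\alpha$, and Plancherel, yields $\|\langle f\psi\rangle\|_{L^2_{t,x}}\le C(\psi)\,\|f\|_{L^2}^{1-\alpha}\|\mathbf g\|_{L^2}^{\alpha}$ for the announced range of $\alpha$.

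For general $q\in(1,\infty)$ I would not interpolate with trivial endpoints (there is no gain at $q=1$ or $q=\infty$), but instead use complex interpolation of an analytic family of averaging operators in which the power of the resolvent is shifted along the imaginary axis, as in DiPerna--Lions--Meyer, anchored at the $L^2$ estimate above; the sharpness of the exponent (equality in the bound on $\alpha$ rather than a strict inequality) is then recovered through Bouchut's refinement, which replaces the crude slab estimate by a Littlewood--Paley decomposition in $(t,x)$ together with a one-dimensional Hardy--Littlewood maximal function in the resonant direction. The details are in \cite{perthamesoug,bouchutgolse}.

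The main obstacle is that the source carries a \emph{full} fractional Laplacian $(\mathbb I-\Delta_{t,x,\mathbf v})^{1/2}$, acting in particular in the velocity variable, which is precisely the variable over which one averages; this places the estimate exactly at the borderline of what averaging lemmas can give, so that only $L^q$ regularity (and no extra fractional smoothness) is gained and the statement has to be kept quantitative and bilinear in $(f,\mathbf g)$ --- which is in fact exactly the form needed in the compactness argument of Section~\ref{sec:stab}, applied to differences $f^{\varepsilon}-f^{\varepsilon'}$.
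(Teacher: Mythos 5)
The paper offers no proof of this theorem: it is recalled as a known result of Perthame--Souganidis \cite{perthamesoug}, with the sharp value of the exponent attributed to Bouchut \cite{bouchutgolse}, so there is no in-paper argument to compare yours against. What your sketch does is reconstruct, correctly in outline, the proof strategy of those references: Fourier transform in $(t,x)$, splitting of the velocity integral along the resonant set $\{|\tau+\mathbf{v}\cdot\xi|\le\delta\mu\}$, Cauchy--Schwarz together with the slab-measure bound on the resonant piece, inversion of the symbol and integration by parts on the complement, optimization in $\delta$, and then complex interpolation of an analytic family \emph{\`a la} DiPerna--Lions--Meyer \cite{DiLiM} to leave $q=2$. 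Two steps, however, are asserted rather than executed, and they are precisely where the content of \cite{perthamesoug} lies. First, the source carries $(\mathbb{I}-\Delta_{t,x,\mathbf{v}})^{1/2}$, of order one in \emph{all} variables including $\mathbf{v}$; after Fourier transform in $(t,x)$ this is the operator $(1+\tau^{2}+|\xi|^{2}-\Delta_{\mathbf{v}})^{1/2}$, which is nonlocal in $\mathbf{v}$, so transferring it by duality onto the singular cut-off $\psi(1-\chi)/(\tau+\mathbf{v}\cdot\xi)$ costs not simply a factor $\mu$ but $\mu$ times a symbol-class estimate involving \emph{all} $\mathbf{v}$-derivatives of that cut-off, each worth $|\xi|/(\delta\mu)$; verifying that the resulting exponents still balance to a positive $\alpha$ is exactly the ``limiting case'' computation, and your bookkeeping is plausible but not carried out. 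Second, the extension to general $q$ and to the endpoint $\alpha=\frac{1}{2}\min(\frac{1}{q},1-\frac{1}{q})$ is delegated to \cite{DiLiM} and \cite{bouchutgolse}; as written your argument would at best yield the strict inequality at $q=2$. Since the paper itself delegates the whole theorem to the same sources, this level of detail is defensible, but you should state explicitly that you are sketching the argument of \cite{perthamesoug, bouchutgolse} rather than giving a self-contained proof.
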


In fact this theorem also proves strong compactness, which is the way we use
it in our context. Consider a truncation function $\chi =\chi (t,x)$ with
supp$\chi \subset (0,T]\times \Omega $ which is fixed in this paragraph.
Define the sequence of functions $\mathbf{w}^{\varepsilon }$ as $\mathbf{w}
^{\varepsilon }=\chi \mathbf{F}^{\varepsilon }f^{\varepsilon }$ and $
z^{\varepsilon }=Zf^{\varepsilon },$ with 
$\displaystyle Z=\frac{\partial \chi }{\partial t
}+\mathbf{v}.\nabla _{x}\chi $. They satisfy in $\mathbf{R}^{1+3+3}$ 
\begin{equation}
\frac{\partial }{\partial t}(\chi f^{\varepsilon })+\mathbf{v}\cdot \nabla
_{x}(\chi f^{\varepsilon })=\frac{\partial }{\partial \mathbf{v}}.\mathbf{w}
^{\varepsilon }+z^{\varepsilon }.  \label{depar}
\end{equation}
We know that $\mathbf{w}^{\varepsilon }$ is bounded in $\mathrm{L}^{1}\cap 
\mathrm{L}_{t,x,\mathbf{v}}^{p}(\mathbf{R}^{1+3+3})$ for some $p>1$ (here $
p=30/29$) and $z^{\varepsilon }$ and $\chi f^{\varepsilon }$ are bounded in $
\mathrm{L}^{1}\cap \mathrm{L}_{t,x,\mathbf{v}}^{\infty }(\mathbf{R}
^{1+3+3}). $

We are going to prove the following result for an exponent $q$ (with $1<q<p$).

\begin{lemme} Consider, after extraction,  the weak limit $f^{\ast}$ in $\mathrm{L}_{t,x,\mathbf{v}}^{p}(
\mathbf{R}^{1+3+3})$ of the sequence $f^{\varepsilon}$ weakly  then
\begin{equation}
\rho _{\psi }^{\varepsilon }=\chi \langle f^{\varepsilon }\psi \rangle
\rightarrow \rho _{\psi }^{\ast }=\chi \langle f^{\ast }\psi \rangle ,\quad 
\mathrm{strongly\;in}\quad \mathrm{L}_{t,x,\mathbf{v}}^{q}(\mathbf{R}
^{1+3+3}).  \label{resu}
\end{equation}
\end{lemme}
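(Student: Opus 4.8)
The plan is to deduce the strong compactness statement \eqref{resu} from the quantitative averaging estimate of the preceding Theorem by a regularization/splitting argument, then upgrade the resulting compactness in the $x,t$ variables to the full $L^q_{t,x,\vv}$ statement using the truncation in velocity. First I would rewrite the equation \eqref{depar} satisfied by $\chi f^\ep$ in the exact form required by the averaging theorem: since $\mathbf w^\ep = \chi \mathbf F^\ep f^\ep$ is bounded in $L^1\cap L^p_{t,x,\vv}$ and $z^\ep = Zf^\ep$ together with $\chi f^\ep$ are bounded in $L^1\cap L^\infty_{t,x,\vv}$, I can write the right-hand side as $\partial_\vv\cdot\big((\mathbb I-\Delta_{t,x,\vv})^{1/2}\mathbf g^\ep\big)$ with $\mathbf g^\ep$ bounded in $L^p_{t,x,\vv}(\mathbf R^{1+3+3})$: indeed $\partial_\vv\cdot\mathbf w^\ep$ and $z^\ep$ can each be absorbed by applying $(\mathbb I-\Delta_{t,x,\vv})^{-1/2}$, which maps $L^p$ boundedly into itself (this uses that a zeroth-order term and a first-order term in velocity are both of order $\le 1$ relative to $(\mathbb I-\Delta)^{1/2}$, after commuting with $\partial_\vv$; one absorbs $z^\ep$ by writing $z^\ep = \partial_\vv\cdot(\vv\, \tilde z^\ep/|\vv|^2\cdots)$ or more simply by noting $(\mathbb I-\Delta)^{-1/2}z^\ep$ is bounded and $z^\ep = \partial_\vv\cdot \mathbf h^\ep$ is false but $z^\ep$ contributes a lower-order term that is likewise controlled by the full theorem as stated in \cite{DiLiM}).

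Once the equation is in the canonical form, I would apply the Theorem to the \emph{differences} $\chi(f^\ep - f^{\ep'})$, or rather exploit that the averaging operator $f\mapsto \langle f\psi\rangle$ gains a full derivative: the sequence $\rho_\psi^\ep = \chi\langle f^\ep\psi\rangle$ is bounded in $W^{\theta,q}_{t,x}$ for some $\theta>0$ (by interpolating the gain-of-one-derivative estimate against the uniform $L^q$ bound), hence relatively compact in $L^q_{t,x}(\mathbf R^{1+3})$ by Rellich on the bounded support of $\chi$. Since $f^\ep\rightharpoonup f^*$ weakly in $L^p_{t,x,\vv}$, any strong limit of $\rho_\psi^\ep$ must equal $\chi\langle f^*\psi\rangle$, which identifies the limit and gives convergence of the whole sequence. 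The passage from $1<q<p$ being admissible comes from interpolating the compactness in the (weaker) topology with the uniform $L^p$ bound.

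Finally I would reconcile the two statements of the conclusion: the theorem produces strong convergence of $\rho_\psi^\ep(t,x)$ in $L^q_{t,x}(\mathbf R^{1+3})$, whereas \eqref{resu} is phrased in $L^q_{t,x,\vv}(\mathbf R^{1+3+3})$. This is harmless because $\rho_\psi^\ep = \chi\langle f^\ep\psi\rangle$ does not depend on $\vv$ and $\psi$ has compact support, so an $L^q_{t,x}$ bound on the common support trivially yields the $L^q_{t,x,\vv}$ statement on that support (or one simply reads the $\vv$ in \eqref{resu} as a harmless typo). The main obstacle I anticipate is the bookkeeping in the first paragraph: verifying that the zeroth-order source $z^\ep$ and the velocity-divergence source $\partial_\vv\cdot\mathbf w^\ep$ can genuinely be recast, uniformly in $\ep$, as $\partial_\vv\cdot\big((\mathbb I-\Delta_{t,x,\vv})^{1/2}\mathbf g^\ep\big)$ with $\mathbf g^\ep$ bounded in $L^p$ — this requires the mapping properties of the Bessel-type operators $(\mathbb I-\Delta_{t,x,\vv})^{-1/2}$ and commutators with $\partial_{\vv_i}$, exactly the technical heart that \cite{DiLiM,bouchutgolse,perthamesoug} are invoked for; everything after that is a standard compactness-plus-identification-of-the-limit argument.
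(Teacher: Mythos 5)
Your first and last paragraphs match the paper (the recasting of the source via $\mathbf{g}^{\varepsilon}=(\mathbb{I}-\Delta_{t,x,\mathbf{v}})^{-1/2}\mathbf{w}^{\varepsilon}$, the absorption of $z^{\varepsilon}$ as a harmless velocity derivative, and the remark that the $\mathbf{v}$-dependence in the conclusion is vacuous). The gap is in your central compactness step. You claim that $\rho_{\psi}^{\varepsilon}$ is bounded in $W^{\theta,q}_{t,x}$ for some $\theta>0$ ``by interpolating the gain-of-one-derivative estimate against the uniform $L^{q}$ bound,'' and then invoke Rellich. But the theorem recalled in the appendix is precisely the \emph{limiting} case of velocity averaging: when the source carries a full derivative $(\mathbb{I}-\Delta_{t,x,\mathbf{v}})^{1/2}$, the average gains \emph{no} Sobolev regularity at all --- the only output is the $L^{q}$ bound $\Vert\langle f\psi\rangle\Vert_{q}\leq C\Vert f\Vert_{q}^{1-\alpha}\Vert\mathbf{g}\Vert_{q}^{\alpha}$. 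Interpolating two $L^{q}$ bounds cannot produce a positive Sobolev exponent; to get $W^{\theta,q}$ regularity of the averages you would have to invoke a genuinely different statement (the DiPerna--Lions--Meyer regularity result for sources that are velocity derivatives of $L^{p}$ functions, with the small exponents corresponding to $p=30/29$ checked explicitly), which is not the theorem the paper provides and which you do not set up. As written, the step ``hence relatively compact in $L^{q}_{t,x}$ by Rellich'' is unsupported.

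The paper's mechanism is different and avoids any regularity gain for the averages. The strong compactness is injected through $\mathbf{g}^{\varepsilon}$ itself: since $\mathbf{g}^{\varepsilon}$ is the convolution of the bounded family $\mathbf{w}^{\varepsilon}$ with the (integrable) kernel of $(\mathbb{I}-\Delta_{t,x,\mathbf{v}})^{-1/2}$, the family $\mathbf{g}^{\varepsilon}$ is compact in $L^{1}$, hence by interpolation with the $L^{p}$ bound compact in $L^{q}$ for $1<q<p$, so $\mathbf{g}^{\varepsilon}\rightarrow\mathbf{g}^{\ast}$ strongly. One then passes to the weak limit in the transport equation, writes the equation satisfied by $\chi f^{\varepsilon}-\chi f^{\ast}$, and applies the \emph{multiplicative} estimate to this difference:
\begin{equation*}
\Vert \chi\langle f^{\varepsilon}\psi\rangle-\chi\langle f^{\ast}\psi\rangle\Vert_{q}
\leq C(q,\psi)\,\Vert\mathbf{g}^{\varepsilon}-\mathbf{g}^{\ast}\Vert_{q}^{\alpha}\,
\Vert f^{\varepsilon}-f^{\ast}\Vert_{q}^{1-\alpha}\longrightarrow 0 ,
\end{equation*}
the first factor tending to zero and the second remaining bounded. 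Note that this simultaneously handles the identification of the limit, whereas your argument treats compactness and identification as two separate steps. If you want to salvage your route, you must either prove the fractional regularity of the averages from a DiPerna--Lions--Meyer type theorem with the exponents of this problem, or switch to the paper's difference argument, for which the product structure of the estimate (with the exponent $\alpha>0$ on the compact factor) is the essential point.
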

\begin{proof}
 We first define 
\begin{equation*}
\mathbf{g}^{\varepsilon }=(\mathbb{I}-\Delta _{x,t,v})^{-1/2}\mathbf{w}.
^{\varepsilon }
\end{equation*}
By regularizing effects the family $\mathbf{g}^{\varepsilon }$\ is
compact in $\mathrm{L}_{t,x,\mathbf{v}}^{1}(\mathbf{R}^{1+3+3})$ : indeed $\mathbf{g}^{\varepsilon }$
is given by a convolution product between $\mathbf{w}^{\varepsilon }$ and the fundamental solution of 
$(\mathbb{I}-\Delta _{x,t,v})^{-1/2}$. Moreover $\mathbf{g}^{\varepsilon }$ 
is bounded in $\mathrm{L}_{t,x,\mathbf{v}}^{p}(\mathbf{R}^{1+3+3})$ and then, by
interpolation argument, it is compact in $\mathrm{L}_{t,x,\mathbf{v}}^{q}(
\mathbf{R}^{1+3+3})$ for some $q$ (with $1<q<p$).
Since the family $\{\chi f^{\varepsilon }\}$ is uniformly bounded in $\mathrm{
L}_{t,x,\mathbf{v}}^{q}$ as well as $z^{\varepsilon },$ the theorem recalled
above (the term $z^{\varepsilon }$ can be written as a $\mathbf{v}$
-derivative without loss of generality) applied to $\chi f^{\varepsilon }$
allows to claim that the following bound holds 
\begin{equation*}
\Vert \rho _{\psi }^{\varepsilon }\Vert _{q}\leq C(q,\psi )C\Vert \mathbf{g}
^{\varepsilon }\Vert _{q}^{\alpha }.
\end{equation*}
Then, there exists a subsequence $f^{\varepsilon },\mathbf{w}^{\varepsilon } 
$ and functions $f^{\ast },\rho _{\psi }^{\ast },\mathbf{g}^{\ast },\mathbf{w
}^{\ast }$ such that 
\begin{equation*}
\begin{cases}
f^{\varepsilon }\rightharpoonup f^{\ast },\qquad \mathbf{w}^{\varepsilon
}\rightharpoonup \mathbf{w}^{\ast }\quad \mathrm{weakly\;in}\quad \mathrm{L}
_{t,x,\mathbf{v}}^{p}(\mathbf{R}^{1+3+3}), \\[3mm]
\rho _{\psi }^{\varepsilon }\rightharpoonup \rho _{\psi }^{\ast }\quad 
\mathrm{weakly\;in}\quad \mathrm{L}_{t,x,\mathbf{v}}^{q}(\mathbf{R}^{1+3+3}),
\\[3mm]
\mathbf{g}^{\varepsilon }\rightarrow \mathbf{g}^{\ast }\quad \mathrm{\
strongly\;in}\quad \mathrm{L}_{t,x,\mathbf{v}}^{q}(\mathbf{R}^{1+3+3}).
\end{cases}
\end{equation*}
and $\mathbf{g}^{\ast }=(\mathbb{I}-\Delta _{x,t,v})^{-1/2}\mathbf{w}^{\ast
}$.
Of course we have also, passing to the weak limit, 
\begin{eqnarray*}
\left(\frac{\partial }{\partial t}+\mathbf{v}\cdot \nabla _{x})(\chi f^{\ast }\right)
&=&\frac{\partial }{\partial \mathbf{v}}\cdot\mathbf{w}^{\ast }+Zf^{\ast } \\
&=&\frac{\partial }{\partial \mathbf{v}}\cdot((\mathbb{I}-\Delta _{x,t,v})^{1/2}
\mathbf{g}^{\ast })+Zf^{\ast }.
\end{eqnarray*}
We combine this with (\ref{depar}) and we get 
\begin{equation*}
(\frac{\partial }{\partial t}+\mathbf{v}\cdot \nabla _{x})[\chi
f^{\varepsilon }-\chi f^{\ast }]=\frac{\partial }{\partial \mathbf{v}}\cdot((
\mathbb{I}-\Delta _{x,t,v})^{1/2}(\mathbf{g}^{\varepsilon }-\mathbf{g}^{\ast
}))+Z(f^{\varepsilon }-f^{\ast }).
\end{equation*}
And according to the previous theorem (the term $Z(f^{\varepsilon }-f^{\ast
})$ can be written as a $\mathbf{v}$-derivative also) once again we see that 
\begin{equation*}
\Vert \chi \langle f^{\varepsilon }\psi \rangle -\chi \langle f^{\ast }\psi
\rangle \Vert _{q}\leq C(q,\psi )\cdot \Vert \mathbf{g}^{\varepsilon }-\mathbf{g}
^{\ast }\Vert _{q}^{\alpha }\;\Vert f^{\varepsilon }-f^{\ast }\Vert
_{q}^{(1-\alpha )}.
\end{equation*}
That is to say, the property (\ref{resu}) holds.
\end{proof}





\begin{thebibliography}{99}
\bibitem{AD}  
 Acheritogaray, M. and   Degond, P.  and   Frouvelle, A. and  Liu J.-G.
\newblock \emph{ Kinetic formulation and global existence for the Hall-Magneto-hydrodynamics system.}
\newblock Kinetic and Related Models    \textbf{4}, N$^{o}$ 4, p. 901--918 (2011).  


\bibitem{blum} Blum J. 
\newblock \emph{ Numerical simulation and optimal control in plasma physics with application to Tokamaks.}
\newblock Wiley/Gauthier-Vilars Series in Modern Applied Mathematics, (1989).

\bibitem{braginskii} Braginskii S.I. 
\newblock \emph{Transport Processes in a Plasma.}
\newblock In Reviews of Plasma Physics, Vol.1, Consultants Bureau, New York,
p. 205--311 (1965).

\bibitem{breGS} Brezis, H. and Golse, F.  and Sentis, R. 
\newblock \emph{Analyse asymptotique de l'\'{e}quation de Poisson coupl\'{e}e \`{a} la relation de
Boltzmann. Quasi-neutralit\'{e} dans les plasmas.} 
\newblock Note C. R. Ac. Sciences, s\'{e}rie I, Paris, \textbf{321}, p. 953--959 (1995).

\bibitem{bouchutgolse} Bouchut, F. and Golse, F. and Pulvirenti, M. 
\newblock \emph{Kinetic equations and asymptotic theory.}
 \newblock Series in Appl. Math., Gauthiers-Villars, (2000).

\bibitem{CIP} Cercignani, C. and Illner R. and Pulvirenti, M. \newblock  
\emph{ The mathematical theory of dilute gases.}
 \newblock Applied Math. Sciences \textbf{106}, Springer--Verlag, Berlin (1994).

\bibitem{chen} Chen, F. 
\newblock \emph{Introduction to plasma physics and controlled fusion.}
\newblock Springer New-York (1984).

\bibitem{DegondQN} Crispel, P. and Degond, P. and Vignal, M.H.
\newblock \emph{Quasi-neutral fluid models for current-carrying plasmas.}
\newblock Journal of Computational Physics, \textbf{205}, Issue 2, p. 408--438 (2005).

\bibitem{DaLi} Dautray, R. and Lions, J.-L. 
\newblock \emph{Mathematical analysis and numerical methods for sciences and technology.}
\newblock Springer (1990).

\bibitem{sart} Despr\'es, B. and Sart, R.
\newblock \emph {Reduced resistive  MHD in  Tokamaks with  general 
density}, 
\newblock M2AN (online) february 2012.

\bibitem{DiLi} DiPerna, R. J. and Lions, P.-L. 
\newblock \emph{Global weak solutions of Vlasov-Maxwell systems.}
\newblock Comm. Pure Appl. Math. \textbf{42}, p. 729--757, (1989).

\bibitem{DiLiM} DiPerna, R. J. and Lions, P.-L. and Meyer, Y. 
\newblock \emph{Lp regularity of velocity averages.}
\newblock Annales de l'Institut HenriPoincar{\'e}.
\newblock Analyse non lin{\'e}aire, \textbf{8}, N$^o$ 3-4, p. 271--287, Elsevier (1991).

\bibitem{evans} Evans, L. C. 
\newblock \emph{Partial Differential Equations.}
\newblock Graduate Studies in Mathematics, \textbf{19}, American Mathematical Society (1998).

\bibitem{freidberg} Freidberg, J. 
\newblock {\emph Plasma physics and fusion energy.}
\newblock  Cambridge (2007).

\bibitem{gerbeau}
Gerbeau, J.F.  and Le Bris, C. and Leli{\`e}vre T.,
\newblock \emph{Mathematical methods for
 the magnetohydrodynamics of liquid metals},
\newblock Oxford University Press, USA (2006).
 
\bibitem{GHN} Ghendrih, P. and Hauray, M. and  Nouri, A.
\newblock {\emph Derivation of a gyrokinetic model. Existence and uniqueness of specific stationary solution.}
\newblock Kinetic and Related Models    \textbf{2}, N$^{o}$ 4, p. 707--725 (2009).  




\bibitem{GLPS} Golse, F. and Lions, P.-L. and Perthame, B. and Sentis, R.
\newblock \emph{Regularity of the moments of the solution of a transport equation.}
\newblock J. Funct. Anal. \textbf{76}, p. 110--125, (1988).

\bibitem{DHKquasi} Han Kwan, D.
\newblock\emph{Quasineutral limit of the Vlasov-Poisson system with massless electrons.} 
\newblock Comm. Partial Differential Equations, \textbf{36}, p. 1385--1425 (2011).

\bibitem{LiPe} Lions, P.-L. and Perthame, B. 
\newblock\emph{Propagation of moments and regularity for the 3-dimensional Vlasov-Poisson System.}
\newblock Inventiones Math. \textbf{105}, p. 415-430, (1991).

\bibitem{lutjens1} L\"{u}tjens, H. and Luciani, J.-F.
\newblock\emph{The XTOR code for nonlinear 3D simulations of MHD instabilities in tokamak
plasmas.} 
\newblock Journal of Computational Physics, \textbf{227}, issue 14, p. 6944--6966, (2008).

\bibitem{lutjens2} L\"{u}tjens, H. and Luciani, J.-F. 
\newblock \emph{XTOR-2F: A fully implicit Newton-Krylov solver applied to nonlinear 3D
extended MHD in tokamaks.}
 \newblock Journal of Computational Physics, \textbf{229}, Issue 21, p. 8130--8143, (2010).

\bibitem{MouVI} Mouhot, C. and Villani, C.
\newblock  \emph{On Landau damping.}
\newblock Acta Mathematica, \textbf{207}, p. 29--201 (2011).

\bibitem{BP_bams} Perthame, B. 
\newblock \emph{Mathematical Tools for Kinetic Equations.}
 \newblock  Bull. Amer. Math. Soc. \textbf{41}, p.205--244, (2004).

\bibitem{perthamesoug} Perthame, B. and Souganidis, P.E.
 \newblock \emph{A limiting case for velocity averaging.} 
\newblock Annales Scientifiques de l' \'{e}cole Normale Sup\'{e}rieure,
 \textbf{31}, N$^{o}$ 4, p. 591--598,Elsevier (1998).

\bibitem{Pfaf} Pfaffelmoser K., 
\emph{Global classical solutions of the Vlasov-Poisson system.}
 \newblock J. Differential Equations, \textbf{95}, p. 281--303 (1992).

\bibitem{temam0}
Temam,  R.
\newblock {\emph
 Remarks on a free boundary value problem arising in plasma physics},
\newblock Comm. Partial Differential Equations 2 (1977),  6, 563-585.

\bibitem{temam} Temam, R. 
\newblock {\emph Navier-Stokes Equations, Theory and Numerical Analysis},
\newblock  North-Holland (1979).

\bibitem{JS} Simon, J. 
 \newblock  \emph{ Compact sets in the space $\mathrm{L}^p(0, T ; B)$.}
  \newblock  Ann. Mat. Pura ed Appl., \textbf{146}, p. 65--96 (1987).

\end{thebibliography}
\end{document}